\newcommand{\doi}[1]{DOI\@: \href{http://dx.doi.org/#1}{\nolinkurl{#1}}}
\newcommand{\overbar}[1]{\mkern 1.5mu\overline{\mkern-1.5mu#1\mkern-1.5mu}\mkern 1.5mu}
\newcommand{\Nnode}{N_{\text{node}}}
\newcommand{\Ncluster}{N_{\text{cluster}}}
\newcommand{\Nlevel}{N_{\text{level}}}
\DeclareMathOperator*{\argmax}{argmax}
\newtheorem{assumption}{Assumption}[section]
\title{Generalizing Lloyd's algorithm for graph clustering\thanks{Submitted to the editors TODO.\funding{The work of S.P.M. was partially supported by an NSERC Discovery Grant.
This material is based in part upon work supported by the Department of Energy,
National Nuclear Security Administration, under Award Number \textit{DE-NA0003963}.}}}
\author{
Tareq Zaman\thanks{Interdisciplinary Program in Scientific
  Computing, Memorial University of Newfoundland, St.\ John's, NL,
  Canada (\email{tzaman@mun.ca}, \email{smaclachlan@mun.ca}).}
\and Nicolas Nytko\thanks{Department of Computer Science,
University of Illinois Urbana-Champaign, Urbana, IL 61801, USA
(\email{nnytko2@illinois.edu}, \email{lukeo@illinois.edu}).}
\and Ali Taghibakhshi\thanks{Department of Mechanical Science and
  Engineering, University of Illinois Urbana-Champaign, Urbana, IL
  61801, USA (\email{alit2@illinois.edu},
  \email{mwest@illinois.edu}).}
\and Scott MacLachlan\footnotemark[2]
\and Luke Olson\footnotemark[3]
\and Matthew West\footnotemark[4]}
\definecolor{tab-blue}{HTML}{1f77b4}
\definecolor{tab-gray}{HTML}{7f7f7f}
\begin{document}

\maketitle

\begin{abstract}
  Clustering is a commonplace problem in many areas of data science,
  with applications in biology and bioinformatics, understanding
  chemical structure, image segmentation, building recommender systems, and
  many more fields.  While there are many different clustering
  variants (based on given distance or graph structure, probability
  distributions, or data density), we consider here the problem of
  clustering nodes in a graph, motivated by the problem of aggregating
  discrete degrees of freedom in
  multigrid and domain decomposition methods for solving sparse linear systems.
  Specifically, we consider the challenge of forming \textit{balanced} clusters
  in the graph of a sparse matrix for use in algebraic multigrid, although
  the algorithm has general applicability.  Based on an extension of the Bellman-Ford algorithm, we generalize Lloyd's algorithm for partitioning subsets of $\mathbb{R}^n$
  to balance the number of nodes in each cluster; this is
  accompanied by a rebalancing algorithm that reduces the overall \textit{energy} in the system.
  The algorithm provides control over the number of clusters and leads to ``well centered''
  partitions of the graph.  Theoretical results are provided to establish linear complexity
  and numerical results in the context of algebraic multigrid highlight the benefits of improved
  clustering.
\end{abstract}

\begin{keywords}
  clustering, aggregation, multigrid, graph partitioning
\end{keywords}

\begin{AMS}
65F50, 65N55, 68R10
\end{AMS}

\section{Introduction}

Consider a directed graph $G(V,E,W)$ where $V$ is a set of nodes (or vertices),
$V=\{1,\dots,\Nnode\}$, and where $E$ is a list of edges given by $E = \{(i,j) \mid
W_{i,j}\ne 0\}$ for some weight matrix $W$. The (sparse) weight matrix $W$ is
assumed to have non-negative off-diagonal entries and zero diagonal entries.
The goal of this work is to take an initial set of clusters and to improve them
via local operations which reduce a quadratic energy functional. These
operations should have linear complexity in the number of nodes for
appropriately sparse graphs and initial seedings
(see~\cref{ass:graph_complexity}).

There are an array of challenges in clustering; the focus here is twofold: (1)
developing efficient algorithms where the number of clusters $\Ncluster$ can be
specified; and (2) generating clusterings that are considered ``well
balanced''.  As a motivating example, we consider a graph generated from a
finite-element discretization on a unit disk with 528 vertices\footnote{This
example is studied in detail in~\cref{sec:numerics}}.
\Cref{fig:disc_allmethods} illustrates the clustering of nodes
using four different methods that underscore these two challenges.
Nearest-neighbor (or \textit{Greedy}) clustering~\cite{vanaek1996algebraic}
yields 63 clusters in
this case.  While this simple algorithm lacks control of the number of
clusters, the clustering offers a clear balance in the number of nodes per
cluster and total diameter of each cluster.  In contrast, with $\Ncluster=52$
the spectral-based partitioner METIS~\cite{karypis1997metis} yields long
clusters (and large diameters).
\begin{figure}
  \centering
  \includegraphics{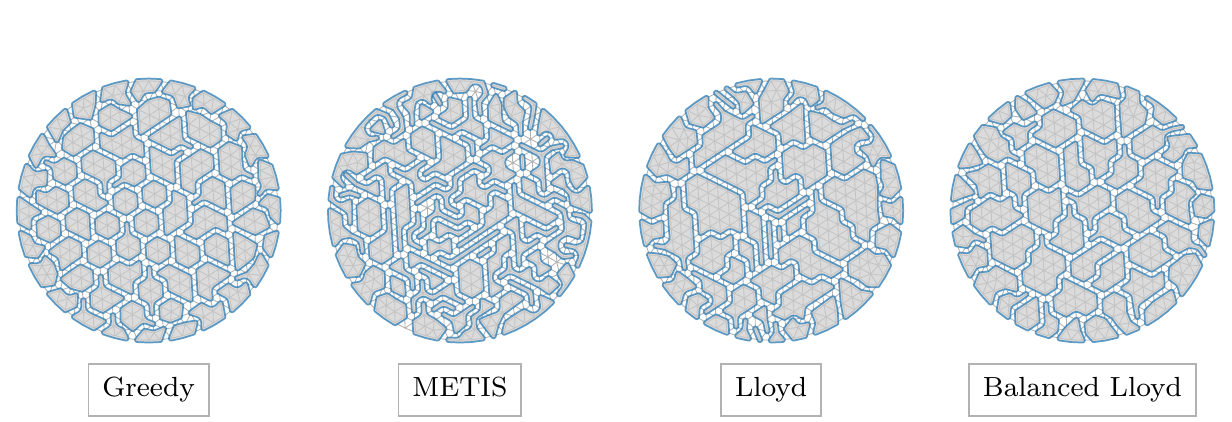}
  \caption{Example clusterings.}\label{fig:disc_allmethods}
\end{figure}

In this work, we focus on shortest-path
based clustering algorithms. \textit{Lloyd} clustering (also known as Lloyd aggregation)~\cite{bell2008algebraic}, for
example, uses Bellman-Ford~\cite{erickson2019} to construct $\Ncluster=52$
clusters based on an initial seeding.  Overall clustering quality depends
\textit{highly} on the initial seeding; often even costly $\mathcal{O}(\Nnode^3)$
algorithms for seeding, such as $k$-means++~\cite{kmeanspp}, do not
dramatically improve the final clustering in this case.  Lastly, we highlight
clustering based on an algorithm introduced here: a balanced form of Lloyd
clustering with \textit{rebalancing} to minimize diameter.  While standard
Lloyd clustering results in both large and small clusters, the cluster shapes
with \textit{balanced} Lloyd clustering and \textit{rebalancing} are more
consistent.

While there is a long history of aggregation-based multigrid methods (cf.~\cite{mika1992acceleration, vanvek1992acceleration, vanaek1996algebraic,notay2010aggregation}), surprisingly little attention has been paid to the influence of cluster quality on the performance of the resulting algorithm.  The \textit{greedy} clustering algorithm originally proposed in~\cite{mika1992acceleration} has become a standard approach that is used in many codes.  Some variants on this approach have been introduced for massively parallel settings; most notably, approaches based on distance-two maximal independent sets in the graph~\cite{RSTuminaro_CTong_2000a, bell2012exposing, KelleyRajamanickam2022}.  Both of these approaches make minimal use of the weight matrix, $W$, aside from using its nonzero pattern to infer binary connectivity data in the graph.  In contrast, in~\cite{bell2008algebraic}, Lloyd's algorithm~\cite{lloyd1982least} was extended from computing Voronoi diagrams in $\mathbb{R}^n$ to computing clusters in graphs, using the values in $W$ to define graph distances.  It is this approach that we extend here.

In this paper, we introduce a general clustering method for use in graph
partitioning and algebraic multigrid that provides control of the number of
clusters, yields ``centered'' clusters, and can be implemented with
off-the-shelf codes for Bellman-Ford and Floyd-Warshall
algorithms~\cite{erickson2019}.
All algorithms are implemented in and are available through the open source
package PyAMG~\cite{BeOlSc2022}.
In~\cref{sec:AMG}, we review
aggregation-based algebraic multigrid (AMG) and survey the Lloyd clustering algorithm.
\Cref{sec:balancedlloyd} introduces \textit{balanced} Lloyd clustering and a
\textit{rebalance} algorithm, along with theoretical evidence of convergence
and complexity.  Finally, \cref{sec:numerics} provides numerical evidence in
support, expanding the example in~\cref{fig:disc_allmethods} and
others.

\textit{Note: throughout the paper and embedded in the algorithms, we make use
of the notation listed in~\cref{tab:symbols}.}

\section{Clustering in algebraic multigrid}\label{sec:AMG}

Algebraic multigrid methods are a family of iterative methods for the solution of sparse linear systems of the form $Au=f$, where $A$ is an $\Nnode\times\Nnode$ matrix and $u$ and $f$ are vectors of dimension $\Nnode$.  Like all multigrid methods, they achieve their efficiency through the use of two complementary processes, known as relaxation and coarse-grid correction.  For algebraic multigrid methods, we typically consider a fixed relaxation scheme (such as a stationary weighted Jacobi or Gauss-Seidel iteration on the linear system) and seek to compute a coarse-grid correction process that adequately complements relaxation to lead to an efficient solution algorithm.  In aggregation-based methods, the coarse-grid correction process takes the form of first computing a clustering of the fine-grid degrees of freedom (nodes in the graph of the sparse matrix, $A$), and then computing an interpolation operator from the clustered degrees of freedom to those on the fine grid.
Rootnode-based aggregation methods additionally make use of a center that is identified for each cluster~\cite{RootNode}.
For a more thorough review of algebraic multigrid methods, see~\cref{app:AMG} or~\cite{WLBriggs_VEHenson_SFMcCormick_2000a, UTrottenberg_etal_2001a}.

\Cref{fig:disc_allmethods} illustrates the wide range of
clusters that can arise for a single problem.  We next detail three common
approaches to clustering (used in the context of AMG), before introducing
a balanced method in the next section.
First, however, we define a clustering or aggregation of $G(V,E,W)$,
as in \cref{def:clustering}.  We note that the clustering is a non-overlapping covering.
\begin{definition}\label{def:clustering}
  A \emph{clustering} or \emph{aggregation} of the connected graph $G(V,E,W)$
  is a pair $(m, c)$, where $m_i$ is the cluster membership of vertex $i$
  and $c_a$ is the global index of the center for cluster $a$.  Then $m$ and
  $c$ have the following properties:
  \begin{enumerate}
    \item For each $i \in \{1,\dots,\Nnode\}$, there exists a unique $a$ with $1\leq a \leq \Ncluster$ such that $m_i = a$;
    \item For each $a\in\{1,\dots,\Ncluster\}$, for every $(i,j)$ with $m_i=m_j=a$, there exists
      a sequence $k_1, \dots, k_p$ where $m_k=a$ for $k \in \{k_1,\dots, k_p\}$ and with
      $(i,k_1),(k_q,k_{q+1}),(k_p,j)\in E$ for $q \in \{1,\dots,p-1\}$; and
    \item For each $a\in\{1,\dots,\Ncluster\}$, we have $1\leq c_a \leq \Nnode$ and $m_{c_a} = a$.
  \end{enumerate}
  The first point ensures that the clustering is a non-overlapping covering, the second requires that the subgraph over the cluster remains connected, and the third confirms that an element of each cluster is identified as the center for that cluster.
\end{definition}

\subsection{Standard AMG clustering}\label{sec:amg-agg}

\Cref{sec:standard-agg} presents two standard clustering strategies used in algebraic multigrid, the greedy algorithm in~\cref{sec:greedy-agg} and the maximal independent set algorithm in~\cref{sec:mis}.  Both of these algorithms share the same shortcoming: they do not allow the user to specify the number of clusters that are returned. This important property is determined solely based on the connectivity of the matrix and the ordering of the sequential passes they make through the matrix.  Nonetheless, these have been observed to be very effective for a certain range of problems, particularly those that arise from low-order finite-element discretization on (nearly) structured meshes in two and three spatial dimensions.

\subsection{Standard Lloyd clustering}\label{sec:lloyd}

In contrast to the standard clustering strategies, Lloyd clustering,
introduced in~\cite{bell2008algebraic}, finds a specified number of clusters ($N_{cluster}$) based on an initial seeding of the clusters.  Lloyd clustering can be viewed as an extension of
Lloyd's algorithm~\cite{lloyd1982least} applied to graphs, where an initial
random seeding of centers yields Voronoi cells (or a set of nodes closest to
each center), followed by a recentering of center locations.

A full algorithm is given in~\cref{alg:lloyd_agg}, where a subset of
$\Ncluster$ nodes are randomly selected as the initial centers, input as $c$. A standard
Bellman-Ford algorithm (see~\cref{alg:bell_ford} and~\cite[Section~8.7]{erickson2019}) is used to find the distance
and index of the closest center; the set of  points closest to each center form
the initial clustering. Next, the border nodes of each cluster are selected and a modified form of the
Bellman-Ford algorithm then identifies the (new)
center~---~see~\cref{alg:interior-nodes}~---~by selecting the node of maximum
distance to the cluster boundary (with ties selected arbitrarily). The steps
are repeated until the algorithm has converged or a maximum number of iterations (given as $T_{\text{max}}$) is reached.
\begin{algorithm}[!ht]
\caption{Lloyd clustering algorithm. See \cref{tab:symbols} for variable definitions.}\label{alg:lloyd_agg}
\begin{algorithmic}[1]
  \Function{lloyd-clustering}{$W,c, T_{\text{max}}$}
  \State $t=0$
\Repeat
  \State $m, d \gets \textsc{bellman-ford}(W, c)$\Comment{find closest centers}
  \State $c \gets \textsc{most-interior-nodes}(W, m)$\Comment{recenter}
  \State $t = t+1$
\Until{$t=T_{\text{max}}$ or no change in $c$ and $m$}
\State \textbf{return} $m, c$
\EndFunction
\end{algorithmic}
\end{algorithm}
\begin{algorithm}[!ht]
\caption{Bellman-Ford algorithm to compute distance and index of closest center. See \cref{tab:symbols} for variable definitions.}\label{alg:bell_ford}
\begin{algorithmic}[1]
\Function{bellman-ford}{$W,c$}
\State $d_i \gets \infty$ for all $i = 1,\ldots,\Nnode$\Comment{initial distance}
\State $m_i \gets 0$ for all $i = 1,\ldots,\Nnode$\Comment{initial membership undefined}
\For {$a \gets 1,\ldots,\Ncluster$}
   \State $i \gets c_a$ \Comment{cluster $a$ has center node $i$}
   \State $d_i \gets 0$ \Comment{distance of a center node to itself is zero}
   \State $m_i \gets a$ \Comment{center node $i$ belongs to its own cluster}
\EndFor
\Repeat \label{line:bell_ford_outer_loop}
    \State $\text{done} \gets \text{true}$
    \For {$i,j$ such that $W_{i,j} > 0$} \Comment{all pairs of adjacent nodes}
        \If {$d_i + W_{i,j} < d_j$} \Comment{node $j$ is closer to $i$'s center}\label{line:bell_ford_cond}
             \State $m_j \gets m_i$ \Comment{switch node $j$ to the same cluster as $i$}\label{line:bell_ford_update_m}
             \State $d_j \gets d_i + W_{i,j}$ \Comment{use the shorter distance via node $i$}
             \State $\text{done} \gets \text{false}$ \Comment{change was made; do not terminate}
        \EndIf
    \EndFor
\Until{done}
\State \textbf{return} $m,d$
\EndFunction
\end{algorithmic}
\end{algorithm}
\begin{algorithm}[!ht]
\caption{Find the most-interior node (furthest from boundary) for each cluster. See \cref{tab:symbols} for variable definitions.}\label{alg:interior-nodes}
\begin{algorithmic}[1]
\Function{most-interior-nodes}{$W,m$}
    \State $B \gets \{\}$ \Comment{border nodes}
    \For{$i,j $ such that $W_{i,j} > 0$} \Comment{all pairs of adjacent nodes}
        \If{$m_i \neq m_j$} \Comment{are nodes $i$ and $j$ in different clusters?}
            \State $B \gets B \cup \{i, j\}$ \Comment{if so, add both of them to the border set}
        \EndIf
    \EndFor
    \State $\cdot, d \gets \textsc{bellman-ford}(W, B)$ \Comment{$d$ is distance from cluster borders}
    \For{$i \gets 1,\ldots,\Nnode$}
        \State $a \gets m_i$ \Comment{$a$ is the cluster index for node $i$}
        \State $c_a \gets i$ \Comment{assign the highest-index node as cluster center}
    \EndFor
    \For{$i \gets 1,\ldots,\Nnode$}
        \State $a \gets m_i$ \Comment{$a$ is the cluster index for node $i$}
        \State $j \gets c_a$ \Comment{$j$ is the current cluster center}
        \If{$d_i > d_j$} \Comment{is node $i$ further from the border than $j$?}
            \State $c_a \gets i$ \Comment{if so, node $i$ is the new cluster center}
        \EndIf
    \EndFor
    \State \textbf{return} $c$
\EndFunction
\end{algorithmic}
\end{algorithm}

\subsubsection{Theoretical observations}

A significant advantage of standard Lloyd clustering, as
in~\cref{alg:lloyd_agg}, is the dependence on \textit{off-the-shelf} algorithms
such as Bellman-Ford.  This allows us to establish key properties that will
carry over to more advanced algorithms in the next section.

To begin, we note that standard Bellman-Ford terminates
(in~\cref{thm:bfterminates}), an important property to maintain as we seek
more balanced clusters.
\begin{theorem}\label{thm:bfterminates}
  \Cref{alg:bell_ford} terminates.
\end{theorem}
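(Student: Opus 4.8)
The plan is to combine monotonicity of the distance array $d$ with a discreteness argument: the entries of $d$ never increase, each is bounded below, and the set of values a single entry can hold has no accumulation point, so each $d_i$ is modified only finitely often; once no $d_i$ can change, the next pass of the outer loop leaves \texttt{done} equal to \texttt{true} and the algorithm halts.

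First I would note that an entry $d_j$ is reassigned (\cref{line:bell_ford_cond}) only when $d_i + W_{i,j} < d_j$, so every reassignment strictly decreases $d_j$, and between reassignments $d_j$ is constant. Next, by induction on the reassignments, every finite value ever held by some $d_i$ equals the length of a directed walk from a center $c_a$ to $i$ using only positively-weighted edges: the initialized values $d_{c_a}=0$ are lengths of trivial walks, and an update $d_j \gets d_i + W_{i,j}$ extends a walk realizing $d_i$ by the edge $(i,j)$, which the loop only considers when $W_{i,j}>0$. Hence every finite value of $d_i$ is nonnegative, so $d_i$ is bounded below by $0$; nodes unreachable from every center simply retain the sentinel value $\infty$.

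The crux is the discreteness step. Put $w_{\min} = \min\{W_{i,j} : W_{i,j}>0\} > 0$. A positively-weighted walk of length at most $L$ uses at most $\lfloor L/w_{\min}\rfloor$ edges, and a finite graph has only finitely many walks with a bounded number of edges, so the set of attainable walk lengths meets every bounded interval in a finite set and therefore has no accumulation point. Consequently the strictly decreasing sequence of distinct values taken by a fixed $d_i$ — which lies in $\{\infty\}$ together with the attainable walk lengths, all at least $0$ — is finite, so each $d_i$ changes only finitely often. Let $T$ be a point in the execution past the last modification of any of the $\Nnode$ entries of $d$; any execution of the outer-loop body starting after $T$ performs no reassignment, since otherwise some $d_j$ would be strictly decreased after $T$. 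That pass therefore leaves \texttt{done} true and the loop at \cref{line:bell_ford_outer_loop} exits; and if no such pass occurs, the loop has already terminated. In either case \cref{alg:bell_ford} terminates.

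The main obstacle is being careful that the inner loop relaxes edges \emph{in place} and in an arbitrary order rather than synchronously; for the monotonicity and walk-length invariants this only helps, but it should be acknowledged explicitly, as should the bookkeeping around $\infty$ (an entry never assigned a finite value is trivially eventually constant, and $\infty + W_{i,j}$ is never strictly below $d_j$, so such entries do not obstruct the terminating pass). If one instead wants the sharper statement that $\Nnode-1$ passes suffice — which additionally yields a complexity estimate — one can run the textbook Bellman-Ford invariant that after $k$ passes $d_i$ is at most the least length of a walk to $i$ with at most $k$ edges, using that positive weights force shortest walks to be simple and hence of edge-length at most $\Nnode-1$; the in-place relaxation again only sharpens this bound.
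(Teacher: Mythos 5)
Your proof is correct, and it is a genuine proof where the paper offers only a one-line citation to \cite[Section~8.7]{erickson2019} and no argument of its own. The route you take also differs from the textbook one that the citation points to. The standard Bellman-Ford termination argument shows that after $\Nnode-1$ passes every $d_i$ equals the true shortest-path distance (because with positive weights shortest walks are simple and thus have at most $\Nnode-1$ edges), so the $\Nnode$-th pass makes no changes and the loop exits --- a structural bound on the number of passes. You instead give a monotonicity-plus-discreteness argument: each $d_j$ is non-increasing, every finite value it takes is the length of some walk from a center, walk lengths in any bounded interval form a finite set because each edge costs at least $w_{\min}>0$, and hence each entry changes only finitely often, after which the next pass leaves \texttt{done} true. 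Your argument is slightly less sharp (it proves termination without yielding the $\Nnode-1$ pass bound directly, though you note how to recover it) but it is more robust: it never invokes the shortest-path optimality of the final $d$ values and applies verbatim to arbitrary relaxation orders, which is exactly the kind of reasoning the paper later needs for \cref{thm:bal_bell_ford_conn} and \cref{lem:bal_bf_energy_decrease} when the relaxation rule is modified. Your attention to the in-place, asynchronous nature of the updates and to the bookkeeping around $\infty$ is also warranted and correctly handled. The only minor nit is that connectivity of $G$ together with at least one center means no node actually retains $d_i=\infty$ in the paper's setting, so your unreachable-node caveat is vacuous there, but including it does no harm and makes the argument more general.
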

\begin{proof}
  This is a standard result~\cite[Section~8.7]{erickson2019}.
\end{proof}
Likewise, while we assume the initial graph is connected, \cref{def:clustering}
requires each of the clusters to be connected.  Bellman-Ford provides this, as summarized
in~\cref{thm:bell_ford_conn}.
\begin{theorem}\label{thm:bell_ford_conn}
  The clusters returned by \cref{alg:bell_ford} are connected.
\end{theorem}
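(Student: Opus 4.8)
The plan is to track, for each non-center node, the edge along which its membership was last assigned, and to show that iterating this ``predecessor'' map backwards produces a path that stays inside a single cluster and ends at that cluster's center. Connectivity of the subgraph induced by each cluster then follows, since every node of a cluster is joined within that cluster to a common vertex, namely its center.

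First I would record a few invariants of \cref{alg:bell_ford}. Since every edge weight is positive and every finite distance is produced as $d_i + W_{i,j}$ from a previously finite $d_i$ (initialised to $0$ at a center), we have $d_j \ge 0$ for all $j$ at all times; in particular the test on line~\ref{line:bell_ford_cond} can never hold with $j$ a center, so each center $c_a$ retains $d_{c_a}=0$ and $m_{c_a}=a$ throughout. Next, inspecting line~\ref{line:bell_ford_update_m}: after initialisation, $m_j$ is reassigned only on an iteration that also strictly decreases $d_j$; hence $m_j$ is constant between consecutive decreases of $d_j$, and is frozen once $d_j$ stops decreasing. Finally, since $G$ is connected and there is at least one center, Bellman–Ford leaves every node with a finite distance and a nonzero membership at termination.

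Now, for each non-center node $j$, let $\pi(j)$ be the node $i$ whose relaxation of $(i,j)$ most recently caused an update of $d_j$ (so $m_j \gets m_i$ and $d_j \gets d_i + W_{i,j}$ at that moment). I claim that at termination $d_j = d_{\pi(j)} + W_{\pi(j),j}$ and $m_j = m_{\pi(j)}$. Indeed, $d_j$ is not changed after this last update; were $d_{\pi(j)}$ to decrease at any later iteration, the test $d_{\pi(j)} + W_{\pi(j),j} < d_j$ would again hold and $d_j$ would be updated once more, a contradiction. Thus $d_{\pi(j)}$ is frozen after $j$'s last update, and therefore so is $m_{\pi(j)}$ by the coupling noted above; the two equalities follow. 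Because $W_{\pi(j),j}>0$, we also get $d_{\pi(j)} < d_j$, so along the sequence $j,\ \pi(j),\ \pi(\pi(j)),\dots$ the terminal distances are strictly decreasing; hence no node repeats and the sequence is finite, terminating at a node with no predecessor. The only such nodes are the centers (a non-center with finite distance must have been updated), so the chain ends at some center, which by the first invariant is $c_{m_j}$. Every node on this chain has membership $m_j$ and consecutive nodes are adjacent in $G$, so the chain is a path joining $j$ to $c_{m_j}$ inside the subgraph induced by cluster $m_j$. Applying this to every vertex shows each cluster's induced subgraph is connected, which is \cref{thm:bell_ford_conn}.

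I expect the genuinely delicate step to be the second one: arguing that $\pi(j)$'s distance (and hence its membership) is frozen as soon as $j$'s distance is. This is precisely what prevents the memberships of $j$ and $\pi(j)$ from drifting apart during the remainder of the run, and it is the only place where the specific update order and the strict-inequality test of Bellman–Ford are used in an essential way; the rest is bookkeeping on the predecessor chain.
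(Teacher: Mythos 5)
Your proof is correct and takes essentially the same approach as the paper's: the paper defers to its proof of \cref{thm:bal_bell_ford_conn}, which likewise shows that predecessor links stay within a cluster by deriving the same contradiction you use (if $d_{\pi(j)}$ had decreased after $j$'s last update then the relaxation test on \cref{line:bell_ford_cond} would still fire, contradicting termination). Your argument spells out more explicitly that the implicit predecessor chain is acyclic (strictly decreasing distances) and must terminate at a center, details the paper leaves tacit.
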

\begin{proof}
  This follows from the proof of \cref{thm:bal_bell_ford_conn}, using only the first case in the proof corresponding to \cref{line:bal_bell_ford_cond1} in \cref{alg:bal_bell_ford}.
\end{proof}

\section{Balanced Lloyd clustering}\label{sec:balancedlloyd}

Lloyd clustering in~\cref{sec:lloyd} enables the construction of a
\textit{variable} number of clusters, based on the initial seeding.  Yet, the
method can result in poor \textit{quality}
clusters~(cf.~\cref{fig:disc_allmethods}).  As an example,
consider a nearest-neighbor weight matrix $W$ based on distance and on a $6\times
6$ structured mesh.  \Cref{fig:bad_aggs} illustrates two common scenarios in
standard Lloyd clustering.  The first is the emergence of long, narrow
clusters. This is, in part, due to the method of finding boundaries in
\textsc{most-interior-nodes}; in this case, the entire cluster (left figure) is comprised of boundary
nodes, leaving no opportunity to re-center.  The second artifact of standard
Lloyd is that of disparate cluster sizes.  Here, we observe both
large clusters and clusters of a single point (right figure).  An immediate goal in the algorithms
of this section is to address these two points.
\begin{figure}
  \centering
  \includegraphics{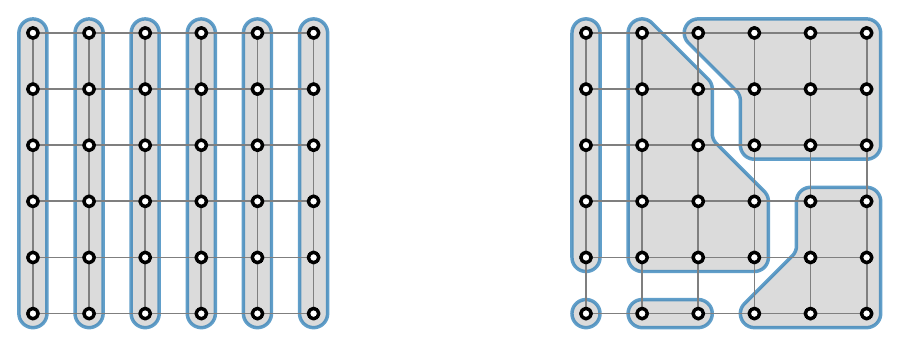}
  \caption{Two example clusterings from Lloyd clustering on a $6 \times 6$ mesh.}\label{fig:bad_aggs}
\end{figure}

The Bellman-Ford algorithm is the central component of standard Lloyd, finding
the shortest path for each seed and being used to identify the center by the most distal
points to each boundary node in a cluster.
In the following, we make use of the total squared distance or the total energy, defined by $k$-means algorithms~\cite{clrs_algorithms},
as the sum of the squares of the distances of any node to its center:
\begin{equation}\label{eq:energy}
  H = \sum_{i=1}^{\Nnode} d_i^2,
\end{equation}
where $d_i$ is defined to be the distance from node $i$ to the center of the
cluster for node $i$, namely $c_a$ where cluster $a$ satisfies $a = m_i$.
In the following, we detail a method for centering
nodes (see~\cref{alg:center_nodes}), where we seek to minimize the energy
in the cluster.  Using the energy, as defined in~\cref{eq:energy},
requires the computation of the shortest path for each pair of nodes in the
cluster, a.k.a.\ the all-pairs shortest path problem.  For this we turn to a
per-cluster use of Floyd-Warshall~\cite{erickson2019} as detailed
in~\cref{alg:floyd_warshall}.
\begin{algorithm}[!ht]
\caption{Floyd-Warshall algorithm~\cite[Section~9.8]{erickson2019} to find inter-node distances within each cluster. See \cref{tab:symbols} for variable definitions.}\label{alg:floyd_warshall}
\begin{algorithmic}[1]
\Function{clustered-floyd-warshall}{$W,m$}
    \State $V_a \gets \{i \mid m_i = a\}$ for all $a = 1,\ldots,N_{\rm cluster}$ \Comment{nodes in cluster $a$}

    \For{$a \gets 1,\ldots,N_{\rm cluster}$}

    \For{$i,j \in V_a$}
    \State $D_{i,j} \gets \infty$ \Comment{initial distance $i \to j$}
    \State $P_{i,j} \gets 0$ \Comment{initial predecessor node for $i \to j$}
    \If{$W_{i,j} > 0$}
    \State $D_{i,j} \gets W_{i,j}$ \Comment{adjacent nodes have the adjacency distance}
    \State $P_{i,j} \gets i$ \Comment{the predecessor is the tail node for adjacent pairs}
    \EndIf
    \If{$i = j$}
    \State $D_{i,i} \gets 0$ \Comment{nodes are distance zero from themselves}
    \State $P_{i,i} \gets i$ \Comment{nodes are their own predecessors to themselves}
    \EndIf
    \EndFor %

    \For{$k \in V_a$} \Comment{potential intermediate node on the path $i \to j$}
    \For{$i,j \in V_a$} \Comment{all other node pairs within the cluster}
    \If{$D_{i,k} + D_{k,j} < D_{i,j}$} \Comment{$i \to k \to j$ shorter than $i \to j$}\label{line:epsfw}
    \State $D_{i,j} \gets D_{i,k} + D_{k,j}$ \Comment{switch to the shorter distance}
    \State $P_{i,j} \gets P_{k,j}$ \Comment{take the predecessor from $k \to j$}
    \EndIf
    \EndFor %
    \EndFor %

    \EndFor %
    \State \textbf{return} $D,P$
\EndFunction
\end{algorithmic}
\end{algorithm}

For cluster $a$, we note that the calculation of shortest paths
in~\cref{alg:floyd_warshall} is $\mathcal{O}(s_a^3)$, where $s_a = |\{i \mid
m_i = a\}|$ is the size of cluster $a$.  In the following section, we
establish linear complexity in the number of nodes, $\Nnode$, with assumptions
on the maximum cluster size.

The use of energy as a target (see~\cref{eq:energy}) provides an
opportunity to rebalance the clustering to account for small or large
clusters.  For this, we introduce a rebalancing algorithm that calculates the energy
increase in splitting clusters and the energy decrease in eliminating clusters.
The overall process relies on the distances from Floyd-Warshall.
In~\cref{sec:balancedlloydalg}, a balanced version of Bellman-Ford is
introduced, leading to a balanced form of Lloyd clustering.  The rebalancing
algorithm is constructed in~\cref{sec:rebalance} followed by theoretical
observations.  The rebalanced Lloyd algorithm requires several components
and we summarize the dependence in~\cref{fig:algdependence}.
\begin{figure}[!ht]
  \centering
\tikzset{every picture/.style={>=latex,
                               line width=0.5pt,
                               line cap=round,
                           }}
\newcommand{\myref}[1]{\tiny\labelcref{#1}}
\newcommand{\myspace}{\hspace*{0.4cm}}
\newcommand{\mysep}{1pt}
\newcommand{\myosep}{2pt}
\begin{tikzpicture}[x=25pt, y=20pt]
  \begin{scope}
    \node[draw=black!20] (lloydagg)     at (0,  0) [anchor=east] {\textsc{lloyd-clustering}\myspace};
    \node[draw=black!10, anchor=east, inner sep=\mysep, outer sep=\myosep] at (lloydagg.east) {\myref{alg:lloyd_agg}};
    \node[draw=black!20] (bellmanford)  at (1,  0) [anchor=west] {\textsc{bellman-ford}\myspace};
    \node[draw=black!10, anchor=east, inner sep=\mysep, outer sep=\myosep] at (bellmanford.east) {\myref{alg:bell_ford}};
    \node[draw=black!20] (mostinterior) at (1, -1) [anchor=west] {\textsc{most-interior}\myspace};
    \node[draw=black!10, anchor=east, inner sep=\mysep, outer sep=\myosep] at (mostinterior.east) {\myref{alg:interior-nodes}};

    \draw[->, line width=1pt] (lloydagg.east) -- (bellmanford.west);
    \draw[->, line width=1pt] (lloydagg.south east) -- (mostinterior.north west);
    \draw[->, line width=1pt] (mostinterior.north) -- (bellmanford.south);
  \end{scope}
  \begin{scope}[shift={(0pt, -70pt)}]
    \node[draw=black!20] (blloydagg)     at (0,  0)  [anchor=east] {\textsc{balanced-lloyd-clustering}\myspace};
    \node[draw=black!10, anchor=east, inner sep=\mysep, outer sep=\myosep] at (blloydagg.east) {\myref{alg:bal_lloyd_agg}};
    \node[draw=black!20] (binit)         at (1,  0)  [anchor=west] {\textsc{balanced-initialization}\myspace};
    \node[draw=black!10, anchor=east, inner sep=\mysep, outer sep=\myosep] at (binit.east) {\myref{alg:bal_lloyd_init}};
    \node[draw=black!20] (bbellmanford)  at (1, -1)  [anchor=west] {\textsc{balanced-bellman-ford}\myspace};
    \node[draw=black!10, anchor=east, inner sep=\mysep, outer sep=\myosep] at (bbellmanford.east) {\myref{alg:bal_bell_ford}};
    \node[draw=black!20] (centernodes)   at (1, -2)  [anchor=west] {\textsc{center-nodes}\myspace};
    \node[draw=black!10, anchor=east, inner sep=\mysep, outer sep=\myosep] at (centernodes.east) {\myref{alg:center_nodes}};
    \node[draw=black!20] (floydwarshall) at (1, -3)  [anchor=west] {\textsc{clustered-floyd-warshall}\myspace};
    \node[draw=black!10, anchor=east, inner sep=\mysep, outer sep=\myosep] at (floydwarshall.east) {\myref{alg:floyd_warshall}};

    \node[draw=black!20!red] (rblloydagg)    at (0, -4)    [anchor=east] {\textsc{rebalanced-lloyd-clustering}\myspace};
    \node[draw=black!10, anchor=east, inner sep=\mysep, outer sep=\myosep] at (rblloydagg.east) {\myref{alg:rebal_lloyd_agg}};
    \node[draw=black!20!red] (rebalance)     at (1, -4)    [anchor=west] {\textsc{rebalance}\myspace};
    \node[draw=black!10, anchor=east, inner sep=\mysep, outer sep=\myosep] at (rebalance.east) {\myref{alg:rebalance}};
    \node[draw=black!20!red] (elimpenalty)   at (2.7, -5)  [anchor=west] {\textsc{elimination-penalty}\myspace};
    \node[draw=black!10, anchor=east, inner sep=\mysep, outer sep=\myosep] at (elimpenalty.east) {\myref{alg:elimination_penalty}};
    \node[draw=black!20!red] (splitimprove)  at (2.7, -6)  [anchor=west] {\textsc{split-improvement}\myspace};
    \node[draw=black!10, anchor=east, inner sep=\mysep, outer sep=\myosep] at (splitimprove.east) {\myref{alg:split_improvement}};
    \node[draw=black!20!red] (markunavail)   at (2.7, -7)  [anchor=west] {\textsc{mark-unavailable}\hspace*{0.2cm}\myspace};
    \node[draw=black!10, anchor=east, inner sep=\mysep, outer sep=\myosep] at (markunavail.east) {\myref{alg:mark_unavailable}};

    \draw[->, line width=1pt, draw=black] (blloydagg.east) -- (binit.west);
    \draw[->, line width=1pt, draw=black] (blloydagg.east) -- (bbellmanford.north west);
    \draw[->, line width=1pt, draw=black] (blloydagg.east) -- (centernodes.north west);
    \draw[->, line width=1pt, draw=black] (blloydagg.east) -- (floydwarshall.north west);
    \draw[->, line width=1pt, draw=black!20!red] (rblloydagg.north) -- (rblloydagg.north |- blloydagg.south);
    \draw[->, line width=1pt, draw=black!20!red] (rblloydagg.east) -- (rebalance.west);
    \draw[->, line width=1pt, draw=black!20!red] (rebalance.south) -- (elimpenalty.north west);
    \draw[->, line width=1pt, draw=black!20!red] (rebalance.south) -- (splitimprove.north west);
    \draw[->, line width=1pt, draw=black!20!red] (rebalance.south) -- (markunavail.north west);
  \end{scope}
\end{tikzpicture}

  \caption{Algorithm dependence with references. Rebalancing components are highlighted in red.}\label{fig:algdependence}
\end{figure}

\subsection{Balanced algorithms}\label{sec:balancedlloydalg}

One disadvantage of Lloyd clustering is that the clusters are not guaranteed
(nor expected) to be uniformly sized.  In many practical settings, a node is likely to have
nearly the same distance to multiple centers.
In this case, Lloyd clustering randomly assigns the
node to a cluster; in contrast, balanced Lloyd clustering targets
uniformly sized clusters, as described
in~\cref{alg:bal_lloyd_agg}. In the balanced approach, if a
node has the same distance to different centers, the node is assigned to a
smaller cluster, leading to increased uniformity across clusters.
To find the new centroid of the cluster, we then use
Floyd-Warshall (\cref{alg:floyd_warshall})
to find the energy-minimizing node, rather than using
Bellman-Ford to find the node which is furthest from the cluster boundary.
A node of a cluster having the minimum sum of squared
distance to other cluster nodes is taken as centroid of that region
(\cref{alg:center_nodes}).  Consequently, long, narrow clusters as in~\cref{fig:bad_aggs}
will expose centers near the true center, whereas the boundary-distances used in standard
Lloyd clustering leave any boundary centers unchanged.
\begin{algorithm}[!ht]
\caption{Balanced version of Lloyd clustering. See \cref{tab:symbols} for variable definitions.}\label{alg:bal_lloyd_agg}
\begin{algorithmic}[1]
\Function{balanced-lloyd-clustering}{$W,c,T_{\text{max}}, T_{\text{BFmax}}$}
\State $m, d, p, n, s \gets \textsc{balanced-initialization}(c, \Nnode)$
\State $t=0$
\Repeat
    \State $m, d, p, n, s \gets \textsc{balanced-bellman-ford}(W, m, c, d, p, n, s, T_{\text{BFmax}})$
    \State $D,P \gets \textsc{clustered-floyd-warshall}(W, m)$
    \State $c, d, p, n \gets \textsc{center-nodes}(W, m, c, d, p, n, D, P)$
    \State $t = t+1$
\Until{$t=T_{\text{max}}$ or no change in any of $m,c,d,p,n,s$}
\State \textbf{return} $m, c, d, p, n, s, D, P$
\EndFunction
\end{algorithmic}
\end{algorithm}
\begin{algorithm}[!ht]
\caption{Initialization for balanced algorithms. See \cref{tab:symbols} for variable definitions.}\label{alg:bal_lloyd_init}
\begin{algorithmic}[1]
\Function{balanced-initialization}{$c,\Nnode$}
\State $m_i \gets 0$ for all $i = 1,\ldots,N_{\rm node}$ \Comment{cluster membership for node $i$}
\State $d_i \gets \infty$ for all $i = 1,\ldots,N_{\rm node}$ \Comment{distance to node $i$ from its cluster center}
\State $p_i \gets 0$ for all $i = 1,\ldots,N_{\rm node}$ \Comment{predecessor node for node $i$}
\State $n_i \gets 0$ for all $i = 1,\ldots,N_{\rm node}$ \Comment{number of predecessor nodes for node $i$}
\State $s_a \gets 1$ for all $a = 1,\ldots,N_{\rm cluster}$ \Comment{size of cluster $a$}
\For {$a \gets 1,\ldots,N_{\rm cluster}$}
    \State $i \gets c_a$ \Comment{$i$ is the center node index for cluster $a$}
    \State $d_i \gets 0$ \Comment{distance of center to node $i$ from itself is zero}
    \State $m_i \gets a$ \Comment{center node $i$ belongs to its own cluster}
    \State $p_i \gets i$ \Comment{centers are their own predecessors}
    \State $n_i \gets 1$ \Comment{centers have one predecessor}
\EndFor
\State \textbf{return} $m,d,p,n,s$
\EndFunction
\end{algorithmic}
\end{algorithm}
\begin{algorithm}[!ht]
  \caption{Balanced version of Bellman-Ford. See \cref{tab:symbols} for variable definitions.}\label{alg:bal_bell_ford}
\begin{algorithmic}[1]
\Function{balanced-bellman-ford}{$W,m,c,d,p,n,s, T_{\text{BFmax}}$}
\State $t \gets 0$; $z^{(t)} \gets z$ for all variables $z$ \Comment{only for use in proofs}
\Repeat
    \State $\text{done} \gets \text{true}$
    \For {$i,j$ such that $W_{i,j} > 0$} \Comment{all pairs of adjacent nodes}
    \State $\mathfrak{s}_i \gets s_{m_i} \text{ if } m_i > 0, \text{ else } 0$ \Comment{size of cluster containing node $i$}
    \State $\mathfrak{s}_j \gets s_{m_j} \text{ if } m_j > 0, \text{ else } 0$ \Comment{size of cluster containing node $j$}
        \State $\text{switch} \gets \text{false}$
        \If {$d_i + W_{i,j} < d_j$} \Comment{$j$ is closer to $i$'s center than its own}\label{line:bal_bell_ford_cond1}
            \State $\text{switch} \gets \text{true}$
        \EndIf
        \If {$d_i + W_{i,j} = d_j$} \Comment{distance to $j$ is similar from $i$'s center}\label{line:bal_bell_ford_cond2a}
        \If {$\mathfrak{s}_i + 1 < \mathfrak{s}_j$} \Comment{node $i$'s cluster is smaller (by 2 or more)}\label{line:bal_bell_ford_cond2b}
                \If {$n_j = 0$} \Comment{node $j$ is free to switch (not a predecessor)}\label{line:bal_bell_ford_cond2c}
                    \State $\text{switch} \gets \text{true}$
                \EndIf
            \EndIf
        \EndIf
        \If {switch}
            \State $s_{m_i} \gets \mathfrak{s}_i + 1$, $s_{m_j} \gets \mathfrak{s}_j - 1$ \Comment{update cluster sizes}\label{line:bal_bell_ford_update_s}
            \State $m_j \gets m_i$ \Comment{switch node $j$ to the same cluster as $i$}\label{line:bal_bell_ford_update_m}
            \State $d_j \gets d_i + W_{i,j}$ \Comment{use the distance via node $i$}\label{line:bal_bell_ford_update_d}
            \State $n_i \gets n_i + 1$, $n_{p_j} \gets n_{p_j} - 1$ \Comment{update predecessor counts}\label{line:bal_bell_ford_update_n}
            \State $p_j \gets i$ \Comment{predecessor of node $j$ is now $i$}\label{line:bal_bell_ford_update_p}
            \State $\text{done} \gets \text{false}$ \Comment{change was made; do not terminate}
        \EndIf
        \State $t \gets t + 1$; $z^{(t)} \gets z$ for all variables $z$ \Comment{only for use in proofs}
   \EndFor
\Until{$t=T_{\text{BFmax}}$ or done}
\State $T \gets t$ \Comment{only for use in proofs}
\State \textbf{return} $m,d,p,n,s$
\EndFunction
\end{algorithmic}
\end{algorithm}

The Lloyd-based algorithms in this paper rely on an initial seeding, which is
an initial set of centers~---~see~\cref{alg:bal_lloyd_init}.  For our
experiments, we use a random selection of centers to seed the clustering and
note that the final clustering depends on this selection.  Adaptive sampling,
$k$-means++~\cite{kmeanspp}, and other methods~\cite{theoreticalkmeans} can be used to improve
the quality of the initial clusters, thereby accelerating convergence and often
leading to higher quality final clusterings as a result.  These methods can be
applied here, but we limit our attention to random initial seedings and focus
on the overarching issue of imbalance in the clusters, which can still persist
even with use of specialized methods for initial seedings.

Finally,  we note that on~\cref{line:bal_bell_ford_cond2a} of~\cref{alg:bal_bell_ford}, the condition $d_i + W_{i,j} = d_j$ should be implemented using an approximate comparison if floating point arithmetic is being used.

\begin{algorithm}[t]
  \caption{Update center nodes to be the cluster centroids. See \cref{tab:symbols} for variable definitions.}\label{alg:center_nodes}
\begin{algorithmic}[1]
\Function{center-nodes}{$W,m,c,d,p,n,D,P$}
    \State $V_a \gets \{i \mid m_i = a\}$ for all $a = 1,\ldots,N_{\rm cluster}$ \Comment{nodes in cluster $a$}

    \For{$a = 1,\ldots,N_{\rm cluster}$} \Comment{treat each cluster $a$ separately}
        \For{$i \in V_a$}
            \State $q_i \gets \sum_{j \in V_a} (D_{i,j})^2$ \Comment{sum of squared distances to other cluster nodes}
        \EndFor

        \State $i \gets c_a$ \Comment{current cluster center}
        \For{$j \in V_a$} \Comment{test node $j$ as a new cluster center}
            \If{$q_j < q_i$} \Comment{does node $j$ have a \textit{strictly} better metric?}
                \State $i \gets j$ \Comment{$j$ will be the new cluster center}
            \EndIf
        \EndFor

        \If{$i \ne c_a$} \Comment{have we found a new center?}
            \State $c_a \gets i$ \label{line:center_nodes_update_c}
            \State $n_j \gets 0 \text{ for all } j \in V_a$ \Comment{reset predecessor counts}
            \For{$j \in V_a$} \Comment{update data for all nodes in the cluster}
                \State $d_j \gets D_{i,j}$ \label{line:center_nodes_update_d}
                \State $p_j \gets P_{i,j}$ \label{line:center_nodes_update_p}
                \State $n_{p_j} \gets n_{p_j} + 1$
            \EndFor
        \EndIf

    \EndFor
    \State \textbf{return} $c,d,p,n$
\EndFunction
\end{algorithmic}
\end{algorithm}

\subsection{Rebalancing clustering}\label{sec:rebalance}

Balanced Lloyd clustering improves the uniformity and roundness of
the clusters in the Lloyd clustering. Yet there there is no
guarantee that the energy is minimized \textit{across} clusters,
due to the initial seeding.  In this section, we develop a rebalancing
algorithm that reduces the overall energy in the clustering by \textit{splitting}
clusters into two and reducing energy, and by \textit{eliminating} clusters leading to an increase in energy.
The trade-off maintains a constant number of clusters, but reduces the total energy in the clustering.
The rebalanced Lloyd algorithm is given in~\cref{alg:rebal_lloyd_agg} and the rebalancing algorithm itself is given in~\cref{alg:rebalance}.

\begin{algorithm}[!ht]
\caption{Rebalanced version of Lloyd clustering. See \cref{tab:symbols} for variable definitions.}\label{alg:rebal_lloyd_agg}
\begin{algorithmic}[1]
  \Function{rebalanced-lloyd-clustering}{$W,c,T_{\text{max}},T_{\text{BFmax}}$}
  \State $t=0$
  \Repeat
      \State $m, c, d, p, n, s, D, P \gets \textsc{balanced-lloyd-clustering}(W, c, T_{\text{max}},T_{\text{BFmax}}$)
      \State $c \gets \textsc{rebalance}(W, m, c, d, p, D)$
      \State $t = t+1$
  \Until{$t=T_{\text{max}}$ or no change in $c$}
  \State \textbf{return} $m, c, d, p, n, s, D, P$
  \EndFunction
\end{algorithmic}
\end{algorithm}

The algorithm relies on two calculations, the first being
in~\cref{alg:elimination_penalty}, which iterates through each cluster and
calculates the energy penalty (increase) resulting from eliminating a cluster
and merging each node with its nearest cluster. The nearest cluster of a node
is defined based on the distance of the centre of the cluster from the node.
Similarly, \cref{alg:split_improvement} computes the energy improvement
(decrease) from optimally splitting each cluster into two clusters. Here, we
determine the splitting (of each cluster) that results in the lowest energy by
considering all possible pairs of new centers within the cluster.

With measures on the penalties and improvements in energy, the rebalancing
algorithm proceeds by eliminating and splitting clusters in pairs, thereby
reducing the total energy while keeping the number of clusters constant. At
first, it eliminates the cluster with the smallest elimination penalty and splits
the cluster with the largest split improvement, if these are distinct clusters.
It then proceeds to eliminate the cluster with the second-smallest penalty and
split the one with the second-largest improvement, again assuming they are
distinct. This process continues until the energy will no longer be decreased
(i.e., the next elimination penalty would be greater than or equal to the next
split improvement), at which point rebalancing terminates. To access
the clusters in sorted order we use an $\textsc{argsort}(L)$ function that
returns the array of indexes $[i_1, i_2, \ldots]$ so that $L_{i_1}, L_{i_2},
\ldots$ will be in sorted order.

During rebalancing, we assume that the elimination penalties and split
improvements of the clusters do not change as we are actually eliminating and
splitting other clusters. However, the penalty and improvement of cluster $a$
depend on its neighboring clusters. For this reason, when we eliminate or split
a cluster, we mark all of its neighbors as unavailable for being eliminated or
split themselves. This ensures that the penalties and improvement values remain
correct for all clusters that are under consideration for elimination or
splitting at each step.
\begin{algorithm}[t]
\caption{Rebalance clusters by eliminating low-energy clusters and splitting the same number of high-energy clusters in two. See \cref{tab:symbols} for variable definitions.}\label{alg:rebalance}
\begin{algorithmic}[1]
\Function{rebalance}{$W,m,c,d,p,D$}
    \State $V_a \gets \{i \mid m_i = a\}$ for all $a = 1,\ldots,N_{\rm cluster}$ \Comment{nodes in cluster $a$}
    \State $L \gets \textsc{elimination-penalty}(W, m, d, D)$
    \State $(S,c^1,c^2) \gets \textsc{split-improvement}(m, d, D)$
    \State $M_a \gets \textsc{true}$ for all $a \gets 1,\ldots,N_{\rm cluster}$ \Comment{all clusters are modifiable}

    \State $L^{\rm sort} \gets \textsc{argsort}(L)$ \label{line:rebalance_argsort_L} \label{line:rebalance_start_block}
    \State $S^{\rm sort} \gets \textsc{argsort}(S)$ \label{line:rebalance_argsort_S}
    \State $i_{\rm L} \gets 1$ \Comment{sorted index of cluster to eliminate}
    \State $i_{\rm S} \gets N_{\rm cluster}$ \Comment{sorted index of cluster to split}

    \While{$i_{\rm L} \le N_{\rm cluster}$ and $i_{\rm S} \ge 1$}

    \State $a_{\rm L} \gets L^{\rm sort}_{i_{\rm L}}$ \Comment{cluster to eliminate}
    \State $a_{\rm S} \gets S^{\rm sort}_{i_{\rm S}}$ \Comment{cluster to split}

    \If{not $M_{a_{\rm L}}$ or $a_{\rm L} = a_{\rm S}$} \Comment{is cluster $a_{\rm L}$ modifiable and distinct?}
    \State $i_{\rm L} \gets i_{\rm L} + 1$
    \State {\bf continue}
    \EndIf

    \If{not $M_{a_{\rm S}}$} \Comment{is cluster $a_{\rm S}$ modifiable?}
    \State $i_{\rm S} \gets i_{\rm S} - 1$
    \State {\bf continue}
    \EndIf

    \If{$L_{a_{\rm L}} \ge S_{a_{\rm S}}$}\Comment{will the energy \textit{not decrease}?}\label{line:rebalance_compare_L_S}
    \State {\bf break}
    \EndIf

    \State $\textsc{mark-unavailable}(a_L,m,M,W,V_{a_{\rm L}})$
    \State $\textsc{mark-unavailable}(a_S,m,M,W,V_{a_{\rm S}})$

    \State $c_{a_{\rm L}} \gets c^1_{a_{\rm S}}$ \Comment{eliminate cluster $a_{\rm L}$}
    \State $c_{a_{\rm S}} \gets c^2_{a_{\rm S}}$ \Comment{split cluster $a_{\rm S}$}

    \EndWhile
    \State \textbf{return} $c$
\EndFunction
\end{algorithmic}
\end{algorithm}

\begin{algorithm}[t]
\caption{Calculate the energy increase that would result from eliminating each cluster. See \cref{tab:symbols} for variable definitions.}\label{alg:elimination_penalty}
\begin{algorithmic}[1]
\Function{elimination-penalty}{$W,m,d,D$}
    \State $V_a \gets \{i \mid m_i = a\}$ for all $a = 1,\ldots,N_{\rm cluster}$ \Comment{nodes in cluster $a$}
    \For{$a = 1,\ldots,N_{\rm cluster}$}
    \State $L_a \gets 0$ \Comment{energy penalty for eliminating cluster $a$}
    \For{$i \in V_a$}
    \State $d_{\rm min} \gets \infty$ \Comment{minimum distance to a different cluster center}
    \For{$j \in V_a$} \Comment{look for connectivity via $j$}
    \For{$k$ such that $W_{k,j} > 0$} \Comment{all neighbors of $j$}
    \If{$m_k \ne m_j$} \Comment{is $k$ in a different cluster to $j$?}
    \If{$d_k + W_{k,j} + D_{j,i} < d_{\rm min}$} \Comment{is $k$'s center closer?}
    \State $d_{\rm min} \gets d_k + W_{k,j} + D_{j,i}$
    \EndIf
    \EndIf
    \EndFor
    \EndFor
    \State $L_a \gets L_a + (d_{\rm min})^2$ \Comment{add the new energy for $i$}
    \EndFor
    \State $L_a \gets L_a - \sum_{i \in V_a} (d_i)^2$ \Comment{subtract the current energy metric}
    \EndFor
    \State \textbf{return} $L$
\EndFunction
\end{algorithmic}
\end{algorithm}

\begin{algorithm}[t]
\caption{Calculate the energy decrease that would result from optimally splitting each cluster in two. See \cref{tab:symbols} for variable definitions.}\label{alg:split_improvement}
\begin{algorithmic}[1]
\Function{split-improvement}{$m,d,D$}
    \State $V_a \gets \{i \mid m_i = a\}$ for all $a = 1,\ldots,N_{\rm cluster}$ \Comment{nodes in cluster $a$}
    \For{$a = 1,\ldots,N_{\rm cluster}$}
    \State $S_a \gets \infty$ \Comment{energy improvement for splitting cluster $a$}
    \For{$i \in V_a$} \Comment{first possible new center}
    \For{$j \in V_a$} \Comment{second possible new center}
    \State $S_{\rm new} \gets 0$ \Comment{energy with centers $i$ and $j$}
    \For{$k \in V_a$} \Comment{compute cost for node $k$}
    \If{$D_{i,k} < D_{j,k}$} \Comment{is $k$ closer to center $i$ or $j$?}
    \State $S_{\rm new} \gets S_{\rm new} + (D_{i,k})^2$
    \Else
    \State $S_{\rm new} \gets S_{\rm new} + (D_{j,k})^2$
    \EndIf
    \EndFor %
    \If{$S_{\rm new} < S_a$} \Comment{is this a better split?}
    \State $S_a \gets S_{\rm new}$ \Comment{store the new energy}
    \State $c^1_a \gets i$ \Comment{store the new centers $i$ and $j$}
    \State $c^2_a \gets j$
    \EndIf
    \EndFor %
    \EndFor %
    \State $S_a \gets \sum_{i \in V_a} (d_i)^2 - S_a$ \Comment{improvement from current cluster energy}
    \EndFor %
    \State \textbf{return} $S,c^1,c^2$
\EndFunction
\end{algorithmic}
\end{algorithm}

\begin{algorithm}[t]
\caption{Mark a cluster and all of its neighbors as unavailable. See \cref{tab:symbols} for variable definitions.}\label{alg:mark_unavailable}
\begin{algorithmic}[1]
\Function{mark-unavailable}{$a,m,M,W,V_a$}
    \State $M_a \gets \text{False}$ \Comment{cluster $a$ is unavailable}
    \For{$i \in V_a$}
    \For{$j$ such that $W_{i,j} > 0$} \Comment{all neighboring nodes of cluster $a$}
    \State{$M_{m_j} \gets \text{False}$} \Comment{cluster of node $j$ is unavailable}
    \EndFor %
    \EndFor %
\EndFunction
\end{algorithmic}
\end{algorithm}

\subsection{Theoretical observations}

In this section we provide theoretical observations on the balanced and rebalanced Lloyd algorithms, covering three key aspects: (1) connectedness of the resulting clusters, (2) the energy-decreasing behavior of the algorithms, and (3) the computational complexity of the algorithms. We note that none of these algorithms are guaranteed to find the global minimum of the energy, but we show that they do reduce the energy at each non-terminating step, and they are guaranteed to terminate.

We have formulated the balanced Bellman-Ford algorithm with a cap on the maximum number of iterations, which will be necessary for proving linear complexity in \cref{thm:bal_lloyd_agg_complexity}. In practice $T_{\text{BFmax}}$ can be chosen to be the maximum expected cluster radius and implementations should warn if \cref{alg:bal_bell_ford} reaches this limit, as this may indicate that the clusters are not connected. This observation relies on the following result.

\begin{theorem}\label{thm:bal_bell_ford_conn}
  The clusters returned by \cref{alg:bal_bell_ford} are connected if it terminates before the maximum number of iterations.
\end{theorem}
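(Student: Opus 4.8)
The plan is to show, by induction on the iteration counter $t$ in \cref{alg:bal_bell_ford}, that the following invariant holds at every step: for each node $j$ with $m_j = a > 0$ and $j \neq c_a$, the predecessor $p_j$ satisfies $m_{p_j} = a$, $W_{p_j, j} > 0$, and $d_j = d_{p_j} + W_{p_j, j}$; moreover following the chain $j \to p_j \to p_{p_j} \to \cdots$ reaches the center $c_a$ in finitely many steps. This invariant immediately gives connectedness: every node in cluster $a$ is joined to $c_a$ by a path lying entirely within cluster $a$, so the induced subgraph on $V_a$ is connected, which is exactly property~(2) of \cref{def:clustering}. The reason the ``terminates before the maximum number of iterations'' hypothesis is needed is that the predecessor-chain-reaches-center part of the invariant is what could fail if the loop is cut off early while some $d_j$ values are still being propagated; if the algorithm exits via \texttt{done} rather than via $t = T_{\text{BFmax}}$, then no further switch is possible, and I will argue the chains must be finite.

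The inductive step has three cases mirroring the branches of the algorithm. First, if no switch is made on the current edge $(i,j)$, the data for $j$ is unchanged and for every other node nothing changes, so the invariant persists (the only subtlety is the predecessor-count bookkeeping $n$, which is not needed for this theorem). Second, if a switch is made because $d_i + W_{i,j} < d_j$ (\cref{line:bal_bell_ford_cond1}), then $j$ receives $p_j \gets i$, $m_j \gets m_i$, $d_j \gets d_i + W_{i,j}$, so the three local conditions hold for $j$ by construction using $W_{i,j} > 0$; I must also check that this reassignment does not break the invariant for any node that had $j$ as its predecessor. Here the guard $n_j = 0$ is absent in this branch, so in principle some node $\ell$ could have $p_\ell = j$; I would argue that if $p_\ell = j$ then $d_\ell = d_j + W_{j,\ell} > d_j$, and after $j$'s distance strictly decreases we still have $d_\ell = d_j^{\text{old}} + W_{j,\ell}$ recorded but $d_j$ smaller — so $\ell$'s recorded distance is now merely an overestimate, not inconsistent with membership, and a subsequent edge relaxation on $(j,\ell)$ will fix it; crucially $m_\ell$ and the ancestor chain are unaffected because $j$'s cluster label is overwritten to $m_i$ only when... this is the delicate point. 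Third, the equal-distance balancing branch (\cref{line:bal_bell_ford_cond2a}--\cref{line:bal_bell_ford_cond2c}) only fires when $n_j = 0$, i.e.\ $j$ is nobody's predecessor, so moving $j$ to cluster $m_i$ cannot orphan any node, and the local conditions again hold by construction.

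I expect the main obstacle to be precisely the case where a node $j$ changes cluster while other nodes still point to it as predecessor — reconciling the claim ``every cluster is connected at termination'' with the fact that intermediate states can have stale predecessor pointers. The cleanest way around this is to prove the invariant only about the \emph{final} state: observe that when the algorithm terminates via \texttt{done}, every edge $(i,j)$ satisfies $d_i + W_{i,j} \geq d_j$, and no balancing switch is enabled either; then for any non-center node $j$ I claim $d_j = d_{p_j} + W_{p_j,j}$ with $m_{p_j} = m_j$ — this can fail only if $p_j$ was last set at a time when $m_{p_j}$ later changed, but a cluster-label change of $p_j$ would have required a switch on some edge into $p_j$, and I would track that such a switch also updates $n$ so that eventually... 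Alternatively, and more robustly, I would show that the quantity $\sum_j d_j$ (or a lexicographic combination of the $d_j$ and cluster sizes) strictly decreases at every switch, bounding the number of switches, and that at the fixed point the predecessor structure is a genuine shortest-path forest rooted at the centers, each tree contained in one cluster. The termination hypothesis rules out the pathological early-exit state in which this forest structure has not yet settled. I would close by noting that \cref{thm:bell_ford_conn} follows as the special case in which the balancing branch is never taken, so only \cref{line:bal_bell_ford_cond1} is active, and there the predecessor chains are unambiguously shortest-path trees.
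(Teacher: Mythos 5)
Your final-state framing in the third paragraph is exactly the paper's approach, and your treatment of the tiebreak branch is correct: if $p_j$'s cluster changed via \cref{line:bal_bell_ford_cond2a,line:bal_bell_ford_cond2c}, the switch requires $n_{p_j}=0$, which is impossible while $j$ points to $p_j$. But you never close the strict-decrease branch (\cref{line:bal_bell_ford_cond1}). You flag it yourself as ``the delicate point'' in the second paragraph, and in the third you trail off at ``such a switch also updates $n$ so that eventually\ldots''~--- but $n$ plays no role in that branch. The missing step is an application of the termination condition you already stated. Fix a node $j$ and let $i = p_j$ in the final state. Let $t$ be the iteration at which $m_j$ and $p_j$ were last set; at that time $m_i^{(t)} = m_j$ and $d_j = d_i^{(t)} + W_{i,j}$, and since $d_j$ is only ever updated together with $p_j$, $d_j$ is frozen at this value for all later iterations. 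If $m_i$ later changed at some $t' > t$ via the strict-decrease branch, then $d_i$ is updated there too and strictly decreases, so at termination $d_i + W_{i,j} < d_i^{(t)} + W_{i,j} = d_j$. But termination via \texttt{done} means every edge satisfies $d_i + W_{i,j} \ge d_j$~--- contradiction. You cannot instead rely on the hope that a ``subsequent edge relaxation on $(j,\ell)$ will fix it'' during the run; it is precisely the \texttt{done} condition that guarantees any such needed relaxation has already taken place, and that is what the hypothesis ``terminates before the maximum number of iterations'' buys you.

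One smaller gap: the invariant you state also requires the predecessor chain to reach the center, which you assert but do not argue. The justification is short and worth writing: whenever $p_j$ is set, $d_j = d_{p_j} + W_{p_j,j} > d_{p_j}$ because $W_{p_j,j} > 0$, so $d$ strictly decreases along the chain, the chain cannot cycle, and it must terminate at a node with $d = 0$, which can only be a cluster center (centers start at $d=0$ and cannot change, all other nodes start at $\infty$ and any update leaves them with $d > 0$). Your closing remark that \cref{thm:bell_ford_conn} follows as the special case where only \cref{line:bal_bell_ford_cond1} fires matches the paper.
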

\begin{proof}
  The proof relies on understanding the state of the variables within~\cref{alg:bal_bell_ford} as the algorithm iterates.  We denote this by using a superscript, $z^{(t)}$ to indicate the state of variable $z$ at a given ``time'', $t$, in the algorithm, with time $T$ denoting completion.

  We wish to show that, for each $j$, $m_j^{(T)} = m_i^{(T)}$ where $i = p_j^{(T)}$, which ensures that the predecessor-paths to cluster centers are contained within each cluster.
  Suppose not and let $t$ be the last iteration when $m_j$ and $p_j$ were updated by \cref{line:bal_bell_ford_update_m,line:bal_bell_ford_update_p}.
  Taking $i = p_j^{(t)}$ we have $m_i^{(t)} = m_j^{(t)} = m_j^{(T)} \ne m_i^{(T)}$ so there must be a later $t' > t$ at which $m_i$ was updated for the last time.
  At this later time, we must have at least one of the following cases.

  \textit{Case 1: condition on \cref{line:bal_bell_ford_cond1} is true.}
  Then $d_i^{(t')} < d_i^{(t)}$, and so $d_i^{(t')} + W_{i,j} < d_i^{(t)} + W_{i,j} = d_j^{(t)} = d_j^{(T)}$.
  This is a contradiction because we cannot have $d_j > d_i + W_{i,j}$ when the algorithm terminates.

  \textit{Case 2: conditions on \cref{line:bal_bell_ford_cond2a,line:bal_bell_ford_cond2c} are true.}
  Then $n_i^{(t')} = 0$, which is impossible since $p_j^{(t')} = p_j^{(t)} = i$.
\end{proof}

To understand the behavior of the balanced algorithms, we consider a modified energy that includes a second term for the cluster sizes. Define
\begin{equation}
  \label{eq:squared_energy}
  H_\delta = \sum_{i = 1}^{N_{\rm node}} (d_i)^2
  + \delta \sum_{a = 1}^{N_{\rm cluster}} (s_a)^2,
\end{equation}
where $d_i$ is the distance from node $i$ to its cluster center, $s_a = |\{i \mid m_i = a\}|$ is the size (number of nodes) of cluster $a$, and
\begin{equation}
  \label{eq:delta}
  \delta = \left(\frac{\Delta_{\rm min}}{N_{\rm node}}\right)^2
\end{equation}
is chosen based on the minimum difference, $\Delta_{\rm min}$, between distinct values of $W_{i,j}$
(or an arbitrarily small positive number if there are no distinct values of $W_{i,j}$).
The first term in \cref{eq:squared_energy} is the sum of squared distances from nodes to their cluster centers, while the second term is the sum of squared cluster sizes. Note that $\delta$ is chosen so that the second term in \cref{eq:squared_energy} is always less than the minimum possible increment in the first term.

\begin{lemma}\label{lem:bal_bf_energy_decrease}
  \Cref{alg:bal_bell_ford} results in a decrease of the energy~\eqref{eq:squared_energy}, or preserves the energy if no change is made to the clustering.
\end{lemma}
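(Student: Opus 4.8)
The plan is to track how $H_\delta$ from~\eqref{eq:squared_energy} changes across a single iteration of the inner \textbf{for} loop of~\cref{alg:bal_bell_ford} (one adjacent pair $(i,j)$): I would show that $H_\delta$ is unchanged whenever the \texttt{switch} branch is not taken and strictly decreases whenever it is taken. Composing these one-step statements over the whole run then gives the lemma, since the only way any state variable changes is through some \texttt{switch}, so ``no change is made to the clustering'' is exactly the case where no \texttt{switch} ever fires. Because $p$ and $n$ do not appear in $H_\delta$, only the updates to $s$, $m$, $d$ on \cref{line:bal_bell_ford_update_s,line:bal_bell_ford_update_m,line:bal_bell_ford_update_d} are relevant. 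I would first record the easy observation that a cluster center never triggers \texttt{switch} --- for \cref{line:bal_bell_ford_cond1} because $W_{i,j}>0$ gives $d_i+W_{i,j}>0=d_{c_a}$, and for the equality branch because $0=d_{c_a}$ cannot equal $d_i+W_{i,j}>0$ --- so a switching node $j$ is never alone in its cluster, $m_i>0$ whenever a switch occurs, and all cluster sizes appearing stay $\ge 1$, making the size bookkeeping well defined.

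When a \texttt{switch} is taken, node $j$ leaves its cluster (of size $\mathfrak{s}_j$) for cluster $m_i$ (of size $\mathfrak{s}_i$), so the size term of $H_\delta$ changes by $\delta\big[(\mathfrak{s}_i+1)^2+(\mathfrak{s}_j-1)^2-\mathfrak{s}_i^2-\mathfrak{s}_j^2\big]=2\delta(\mathfrak{s}_i-\mathfrak{s}_j+1)$ and the distance term changes by $(d_i+W_{i,j})^2-(d_j^{\rm old})^2$, where $d_j^{\rm old}$ is the value of $d_j$ before the update. I would then split on which condition fired. If the equality branch fired (\cref{line:bal_bell_ford_cond2a,line:bal_bell_ford_cond2b,line:bal_bell_ford_cond2c}), then $d_i+W_{i,j}=d_j^{\rm old}$, so the distance term is unchanged, while $\mathfrak{s}_i+1<\mathfrak{s}_j$ forces $2\delta(\mathfrak{s}_i-\mathfrak{s}_j+1)\le-2\delta<0$, giving a strict decrease of $H_\delta$. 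If instead \cref{line:bal_bell_ford_cond1} fired, the distance term strictly decreases, and the remaining task is to show this decrease strictly dominates any increase in the size term.

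That last comparison is the crux, and it is exactly where the choice $\delta=(\Delta_{\rm min}/\Nnode)^2$ from~\eqref{eq:delta} is used. Since the size term is non-negative, its increase is at most its whole value $\delta\sum_a s_a^2\le\delta\big(\sum_a s_a\big)^2=\delta\,\Nnode^2=\Delta_{\rm min}^2$ (and, since only two clusters change size by $\pm 1$, in fact at most $2\delta\Nnode$). For the distance term, $d_j^{\rm old}$ and $d_i+W_{i,j}$ are two distinct attainable node-to-center distances, i.e.\ distinct sums of edge weights along predecessor paths, so one would argue they differ by at least $\Delta_{\rm min}$; together with $d_i+W_{i,j}\ge W_{i,j}>0$ this yields $(d_j^{\rm old})^2-(d_i+W_{i,j})^2>\Delta_{\rm min}^2$, so the net change in $H_\delta$ is strictly negative. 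I expect the main obstacle to be precisely this estimate: establishing a clean lower bound on the smallest positive jump that one Bellman-Ford relaxation can make in $\sum_i d_i^2$ --- equivalently, a lower bound on the gap between distinct attainable distances --- since the entire design of $\delta$ in~\eqref{eq:delta} rests on that gap being no smaller than the worst-case size-penalty increment.
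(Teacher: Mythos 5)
Your proof takes essentially the same route as the paper's: the same case split on whether the strict-inequality condition or the tiebreak condition triggered the switch, the same argument that the tiebreak case drops the size term with the distance term frozen, and the same comparison of a $(\Delta_{\rm min})^2$ drop in the distance term against a size-term increase bounded by $\delta\Nnode^2$. The step you correctly flag as the crux --- that a Bellman-Ford relaxation must reduce $(d_j)^2$ by at least $(\Delta_{\rm min})^2$ --- is asserted without further argument in the paper's proof as well, so you have matched both the structure and the level of rigor of the published argument.
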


\begin{proof}
  We will show that all steps in the algorithm that change $d_j$ or $s_a$ result in a strict decrease of $H_\delta$.

  \textit{Case 1:} updates by \cref{line:bal_bell_ford_update_s,line:bal_bell_ford_update_d} with $d_j$ strictly decreasing.
  Then the reduction in the first term in $H_\delta$ is at least $(\Delta_{\rm min})^2$ and any increase in the second term in $H_\delta$ is less than $(N_{\rm node})^2$, so the definition of $\delta$ means the decrease strictly dominates.

  \textit{Case 2:} updates by \cref{line:bal_bell_ford_update_s,line:bal_bell_ford_update_d} with $d_j$ constant and $\mathfrak{s}_j > \mathfrak{s}_i + 1$.
  Then the first term in $H_\delta$ is constant and $s_{m_i} \gets \mathfrak{s}_i + 1$ and $s_{m_j} \gets \mathfrak{s}_j - 1$ results in a strict decrease of $(s_{m_i})^2 + (s_{m_j})^2$.
\end{proof}

\begin{lemma}\label{lem:center_nodes_energy_decrease}
  \Cref{alg:center_nodes} results in a decrease of the energy~(\ref{eq:squared_energy}), or preserves the energy if no change is made to the clustering.
\end{lemma}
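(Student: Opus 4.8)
The plan is to exploit the fact that \cref{alg:center_nodes} never changes the membership vector $m$: it only (possibly) replaces the center of each cluster, and, when it does, overwrites the distances $d_j$ of that cluster's nodes by the in-cluster Floyd--Warshall distances to the new center. Hence the size vector $s$ is untouched, so the second term $\delta\sum_a (s_a)^2$ of $H_\delta$ is unchanged, and it suffices to show that the first term $\sum_{i=1}^{\Nnode}(d_i)^2$ never increases and strictly decreases whenever some center is modified. Since the clusters $V_a$ partition $V$ and \cref{alg:center_nodes} treats them independently — modifying $d_j$ only for $j\in V_a$ and only when $c_a$ changes — the change in $\sum_i (d_i)^2$ is the sum over $a$ of the per-cluster changes, so I will bound each cluster's change separately.

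Fix a cluster $a$ whose center is changed, from the incoming center $c_a$ to the new center $i$ selected by the loop over $V_a$. The key step is the inequality $d_j \ge D_{c_a,j}$ for every $j\in V_a$, where $d_j$ is the value held on entry to \cref{alg:center_nodes}. This holds because, at that point, $d_j$ equals the length of the predecessor walk $j,\, p_j,\, p_{p_j},\dots$, which terminates at $c_a$ without leaving $V_a$ — the same predecessor-path invariant that is established in the proof of \cref{thm:bal_bell_ford_conn} for the output of \cref{alg:bal_bell_ford}, and that \cref{alg:center_nodes} itself re-establishes via \cref{line:center_nodes_update_d,line:center_nodes_update_p} using the Floyd--Warshall predecessors $P$. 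Thus $d_j$ is the length of a walk from $c_a$ to $j$ inside the subgraph induced by $V_a$, whereas $D_{c_a,j}$, as computed by \cref{alg:floyd_warshall}, is the minimum such length; summing squares gives $\sum_{j\in V_a}(d_j)^2 \ge \sum_{j\in V_a}(D_{c_a,j})^2 = q_{c_a}$, with $q$ as in \cref{alg:center_nodes}.

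Next I would read off the new energy: after the update, \cref{line:center_nodes_update_d} sets $d_j = D_{i,j}$ for all $j\in V_a$, so the new contribution of cluster $a$ is exactly $\sum_{j\in V_a}(D_{i,j})^2 = q_i$. The selection loop starts its running index at $c_a$ and moves it only on a strict decrease of $q$, so $i\ne c_a$ forces $q_i < q_{c_a}$. Chaining the inequalities, the new contribution $q_i$ is strictly less than the old contribution, which is $\ge q_{c_a}$; hence this cluster's contribution strictly decreases. Summing over all clusters — those with unchanged centers contributing an unchanged $\sum_{j\in V_a}(d_j)^2$, and $s$ being unchanged throughout — shows $H_\delta$ strictly decreases if any center changes and is preserved otherwise.

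I expect the inequality $d_j\ge D_{c_a,j}$ to be the main obstacle: it is what legitimizes comparing the old energy against $q_{c_a}$, and it would fail if the predecessor walk from $j$ were allowed to leave $V_a$, in which case $d_j$ could be strictly smaller than the in-cluster shortest-path distance. Pinning it down requires care about exactly which $d$ and $p$ arrays are passed into \cref{alg:center_nodes} (namely, the output of \cref{alg:bal_bell_ford} within \cref{alg:bal_lloyd_agg}) and leaning on the connectivity/predecessor-path analysis behind \cref{thm:bal_bell_ford_conn}; the rest is routine arithmetic with squares.
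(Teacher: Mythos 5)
Your proof is correct and takes the same basic route as the paper's: both rest on the observation that membership (and hence $s$) is untouched, so only the $\sum_i(d_i)^2$ term can change, and that the selection loop only accepts a new center when $q$ strictly decreases. The paper's proof is much terser — it simply states that ``$q_j<q_i$'' forces the first term to decrease — and implicitly treats the incoming per-cluster contribution $\sum_{j\in V_a}(d_j)^2$ as interchangeable with $q_{c_a}=\sum_{j\in V_a}(D_{c_a,j})^2$, which is the one step the paper does not justify. You fill that gap explicitly with the inequality $d_j\ge D_{c_a,j}$, derived from the predecessor-path invariant underlying \cref{thm:bal_bell_ford_conn} (the predecessor walk from $j$ terminates at $c_a$ without leaving $V_a$, so its length $d_j$ bounds the in-cluster shortest distance $D_{c_a,j}$ from above). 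This weaker inequality is all that is needed, and it also covers the case where balanced Bellman--Ford has not fully converged to the in-cluster shortest-path distances, so your argument is a slightly more robust version of the same idea. The only caveat you correctly flag is that this invariant relies on \cref{alg:bal_bell_ford} terminating before $T_{\text{BFmax}}$; the paper's proof carries the same implicit assumption.
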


\begin{proof}
  Only updates by \cref{line:center_nodes_update_d} will change $d_j$. Because the change is caused by the use of $j$ as the new center, and $q_j < q_i$, the first term in $H_\delta$ strictly decreases and the second term is unchanged.
\end{proof}

\begin{theorem}\label{thm:bal_lloyd_agg_termination}
  \Cref{alg:bal_lloyd_agg} terminates, even if $T_{\text{max}} = \infty$.
\end{theorem}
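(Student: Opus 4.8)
The plan is to use the modified energy $H_\delta$ of \cref{eq:squared_energy} as a monotone progress measure for the outer \textbf{repeat} loop of \cref{alg:bal_lloyd_agg}. First, each individual pass of that loop is finite: \textsc{balanced-bellman-ford} terminates because of its cap $T_{\text{BFmax}}$ (which remains finite even when $T_{\text{max}}=\infty$), while \textsc{clustered-floyd-warshall} and \textsc{center-nodes} are plainly finite, consisting only of bounded loops over the nodes, the clusters, and the edges. So the sole task is to rule out infinitely many passes, i.e., to show that ``no change in any of $m,c,d,p,n,s$'' must eventually hold.

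The core step is a dichotomy: \emph{each pass of the outer loop either leaves all of $m,c,d,p,n,s$ unchanged, so the loop exits, or strictly decreases $H_\delta$.} The call to \textsc{clustered-floyd-warshall} writes only $D$ and $P$, so it touches none of the tracked variables and leaves $H_\delta$ fixed. By \cref{lem:bal_bf_energy_decrease} and \cref{lem:center_nodes_energy_decrease}, neither \textsc{balanced-bellman-ford} nor \textsc{center-nodes} ever increases $H_\delta$. For strictness, I would inspect the write sites: in \cref{alg:bal_bell_ford} every update of $m$, $p$, or $n$ lies inside the \textbf{if switch} block, which also executes \cref{line:bal_bell_ford_update_s,line:bal_bell_ford_update_d}, and the proof of \cref{lem:bal_bf_energy_decrease} shows each such block strictly lowers $H_\delta$; in \cref{alg:center_nodes} every update of $d$, $p$, or $n$ lies inside the \textbf{if }$i\neq c_a$ block, which also rewrites $c_a$ and, by the proof of \cref{lem:center_nodes_energy_decrease}, strictly lowers $H_\delta$. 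Hence any pass that changes one of $m,c,d,p,n,s$ strictly decreases $H_\delta$, and a pass that changes nothing triggers the termination test.

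It then remains to show $H_\delta$ cannot strictly decrease infinitely often. Since $H_\delta$ is non-increasing over passes and $H_\delta\ge 0$, once every node has been assigned a cluster (which occurs during the first pass, as the practical choice of $T_{\text{BFmax}}$ ensures) we have $\delta\sum_a s_a^2\le H_\delta$ and $\sum_i d_i^2\le H_\delta$, so each $s_a$ is an integer in a fixed bounded range and each $d_i$ is bounded. Moreover, because every edge weight $W_{i,j}$ is strictly positive, the predecessor structure maintained by \cref{alg:bal_bell_ford} is acyclic and the inter-node distances returned by \cref{alg:floyd_warshall} are realized by simple paths; hence every $d_i$ is either $0$ or the weight-length of a simple path in $G$, and there are only finitely many such values. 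Therefore $H_\delta=\sum_i d_i^2+\delta\sum_a s_a^2$ attains only finitely many values over the whole run, and a non-increasing sequence in a finite set can strictly decrease only finitely many times. Consequently the outer loop performs only finitely many passes, and \cref{alg:bal_lloyd_agg} terminates regardless of $T_{\text{max}}$.

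The main obstacle is the combination of the strict-decrease dichotomy and the finiteness of the energy spectrum: one must verify that \emph{every} write to a bookkeeping variable — not just to $d$ or $s$ — is paired with a strict energy drop, and one must control the real-valued distances, where positivity of the weights (excluding zero-weight cycles and forcing simple-path distances) is the essential ingredient; the rest is immediate from the two energy-decrease lemmas and the triviality of Floyd--Warshall's effect on the tracked state. As an alternative to the finiteness argument, one could instead exhibit an explicit $\varepsilon>0$, assembled from $\delta$, $\Delta_{\rm min}$, and the finitely many attainable values of the centering metric $\sum_j (D_{i,j})^2$, by which each non-terminating pass must lower $H_\delta$, and then bound the number of passes by $H_\delta^{(0)}/\varepsilon$.
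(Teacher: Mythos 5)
Your proof is correct and takes essentially the same approach as the paper: use $H_\delta$ as a monotone progress measure via \cref{lem:bal_bf_energy_decrease,lem:center_nodes_energy_decrease}, then conclude termination from the fact that $H_\delta$ can assume only finitely many values. You additionally spell out two points the paper leaves implicit—that every write to the bookkeeping variables $m,c,p,n$ is coupled to a write to $d$ or $s$ so that non-termination always forces a strict decrease, and that finiteness of $H_\delta$'s range follows from the $d_i$ being simple-path lengths—but these are elaborations of the same argument, not a different route.
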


\begin{proof}
  From \cref{lem:bal_bf_energy_decrease,lem:center_nodes_energy_decrease}, all steps in the algorithms that change $d_i$ or $s_a$ result in a strict decrease of $H_\delta$. Because $H_\delta$ is positive and can only take a finite number of values, and we terminate when no changes are made, this ensures termination.
\end{proof}

\begin{theorem}
  \Cref{alg:rebalance} results in a decrease or preservation of the energy \cref{eq:squared_energy}.
\end{theorem}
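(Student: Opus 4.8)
The plan is to follow the energy~\eqref{eq:squared_energy} through the \textbf{while} loop of \cref{alg:rebalance}. Because the algorithm changes only the centre array $c$, I would attach to the loop a sequence of \emph{nominal} clusterings: when an iteration actually performs an (eliminate $a_{\rm L}$, split $a_{\rm S}$) pair, the associated clustering reassigns each node of $V_{a_{\rm L}}$ to the surviving cluster it reaches at least cost --- the value $d_{\rm min}$ built up in \cref{alg:elimination_penalty} --- and partitions $V_{a_{\rm S}}$ between the new centres $c^1_{a_{\rm S}}, c^2_{a_{\rm S}}$ by proximity, as in \cref{alg:split_improvement}. Such a pair destroys one cluster and creates one, so $N_{\rm cluster}$ is preserved; hence the second term of~\eqref{eq:squared_energy} stays bounded by $\delta N_{\rm node}^2 = \Delta_{\rm min}^2$, which by the choice of $\delta$ (as in \cref{lem:bal_bf_energy_decrease,lem:center_nodes_energy_decrease}) is dominated by any strict decrease of the first term. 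So it suffices to show $\sum_i d_i^2$ never increases and strictly decreases on every performed iteration.

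For one performed iteration, the guards preceding \cref{line:rebalance_compare_L_S} ensure $a_{\rm L}$ and $a_{\rm S}$ are both marked available, that $a_{\rm L}\neq a_{\rm S}$, and that $L_{a_{\rm L}} < S_{a_{\rm S}}$. Since $a_{\rm L}\neq a_{\rm S}$ the sets $V_{a_{\rm L}}$ and $V_{a_{\rm S}}$ are disjoint, and the nominal change moves only these nodes: by the definition of $L$ in \cref{alg:elimination_penalty} the energy over $V_{a_{\rm L}}$ changes by exactly $+L_{a_{\rm L}}$, and by the definition of $S$ in \cref{alg:split_improvement} the energy over $V_{a_{\rm S}}$ changes by exactly $-S_{a_{\rm S}}$; every other node is untouched. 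The net change is $L_{a_{\rm L}} - S_{a_{\rm S}} < 0$, a strict decrease.

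The crux --- and the step I expect to be the main obstacle --- is showing the arrays $L$ and $S$, computed once at the top of \cref{alg:rebalance}, remain correct for every cluster still eligible in later iterations, so that the per-iteration changes are genuinely additive. For this I would combine the \textsc{mark-unavailable} calls with the structure of the two subroutines: $S_a$ depends only on data internal to cluster $a$, whereas $L_a$ depends on cluster $a$ and on the centres of its neighbouring clusters; hence modifying $a_{\rm L}$ or $a_{\rm S}$ can disturb $L_b$ or $S_b$ only for $b$ equal or adjacent to $a_{\rm L}$ or $a_{\rm S}$, and \cref{alg:mark_unavailable} marks exactly those $b$ unavailable. Any cluster that is still available in a later iteration is therefore node-disjoint from and non-adjacent to every earlier-modified cluster, so its stored $L$ and $S$ are still exact. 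The delicate sub-case is when $a_{\rm L}$ and $a_{\rm S}$ are adjacent within one iteration, so that re-centring $a_{\rm S}$ perturbs a centre used in the reassignment realizing $L_{a_{\rm L}}$; I would close this either by checking that this perturbation cannot raise the achievable reassignment cost above $L_{a_{\rm L}}$, or by appealing to the subsequent re-run of \textsc{balanced-bellman-ford} from the new centres in \cref{alg:rebal_lloyd_agg}, which can only lower the energy since Bellman-Ford routes each node to a nearest centre.

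Finally I would record termination and the equality case. Each iteration increments $i_{\rm L}$, decrements $i_{\rm S}$, \textbf{break}s, or performs a pair; a performed pair marks $a_{\rm L}$ and $a_{\rm S}$ unavailable, so it cannot repeat at the same indices, and the number of performed pairs is at most $N_{\rm cluster}$ --- hence the loop halts. If no pair is ever performed, $c$ is returned unchanged and the energy is exactly preserved; otherwise at least one strict decrease occurs. This gives the stated dichotomy.
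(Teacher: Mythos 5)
Your proposal takes the same basic route as the paper: the inequality tested on \cref{line:rebalance_compare_L_S} guarantees a strict decrease of the first term of $H_\delta$ for each performed eliminate/split pair, and the choice of $\delta$ ensures this dominates any change in the second term. The paper's own proof is literally one sentence asserting exactly that and nothing more. What you add is the supporting scaffolding the paper treats as obvious: the ``nominal clustering'' device (needed because \cref{alg:rebalance} modifies only $c$, so the energy \cref{eq:squared_energy} as written does not change at all unless you attach an implied reassignment of $m$, $d$, $s$); the bookkeeping that \textsc{mark-unavailable} keeps the precomputed $L$ and $S$ exact for every cluster that remains eligible in later iterations; and the termination argument. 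All of this is correct and a genuine improvement in rigor over the one-liner in the paper.

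The one place your argument remains open is exactly the place you yourself flag: a performed iteration can pick an adjacent pair $a_{\rm L}$, $a_{\rm S}$ (nothing in the guards forbids this --- \textsc{mark-unavailable} runs \emph{after} the checks, so it only protects later iterations), and in that case the stored $L_{a_{\rm L}}$ was computed using $d_k$ for $k\in V_{a_{\rm S}}$ relative to $a_{\rm S}$'s \emph{old} centre. Neither of your two proposed closings is complete as stated. The first (that perturbing $a_{\rm S}$'s centre cannot raise the achievable reassignment cost above $L_{a_{\rm L}}$) is not obvious: the split centres $c^1_{a_{\rm S}},c^2_{a_{\rm S}}$ are chosen to minimize energy over $V_{a_{\rm S}}$, not to serve nodes of $V_{a_{\rm L}}$, so a node in $V_{a_{\rm L}}$ that would have been routed to $a_{\rm S}$'s old centre may now be worse off. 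The second (appealing to the subsequent \textsc{balanced-bellman-ford} re-run) lies outside \cref{alg:rebalance}; the theorem is stated about that subroutine alone, and the re-run happens in \cref{alg:rebal_lloyd_agg} after \textsc{balanced-initialization} resets $d$ to $\infty$, so the ``energy only decreases'' lemmas do not directly chain. To be clear, the paper's proof does not address this either; you have identified a genuine hole that the paper papers over, even if you have not fully filled it.
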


\begin{proof}
Because of~\cref{line:rebalance_compare_L_S}, each elimination/split pairing explicitly results in a decrease of the first term in \cref{eq:squared_energy} and thus also a decrease in the overall value of $H_\delta$ due to the choice of $\delta$.
\end{proof}

To give bounds on the computational complexity of the algorithms, we require the following assumptions on the graph structure.

\begin{assumption}\label{ass:graph_complexity}
  Assume that the number of edges in $G$ incident on each vertex in the graph is bounded independently of $\Nnode$, and that the initial centers, $c$, are such that clusters found by~\cref{alg:bal_bell_ford} have size bounded independently of $\Nnode$.
\end{assumption}

We note that the first part of~\cref{ass:graph_complexity} is reasonable for many application areas, including when considering graphs associated with the sparse matrices that arise from finite-element (and other) discretizations of PDEs.  Here, it is uncommon to see vertices with more than $\mathcal{O}(1)$ incident edges.  The second part of~\cref{ass:graph_complexity} arises from a probabilistic lens, where we assume that the initial random seeding is ``well-distributed'', so that there are not large connected components with no initial centers, leading to large distances from $\mathcal{O}(\Nnode)$ points to their nearest center.  Quantifying the probability with which this holds depends strongly on the properties of the graphs under consideration, and we leave this aspect for future work, noting the worst-case complexity of our algorithm is certainly quadratic (or worse) in $\Nnode$.  Achieving good clustering in linear time without a similar assumption on the initial seeds has also been investigated, necessarily involving long-range exchanges to move centers from regions of the graph with ``extra'' centers to regions with many points far from a center~\cite{doi:10.1137/040607769}.

\begin{theorem}\label{thm:bal_lloyd_agg_complexity}
  Under \cref{ass:graph_complexity}, if $T_{\text{max}}$ and $T_{\text{BFmax}}$ are both bounded independently of $\Nnode$, then the total cost of~\cref{alg:bal_lloyd_agg} is $\mathcal{O}(\Nnode)$.
\end{theorem}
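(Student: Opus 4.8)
The plan is to bound the cost of \cref{alg:bal_lloyd_agg} by accounting for each of its four subroutines — \textsc{balanced-initialization}, \textsc{balanced-bellman-ford}, \textsc{clustered-floyd-warshall}, and \textsc{center-nodes} — and then multiplying by the iteration cap $T_{\text{max}}$. Since $T_{\text{max}}$ is bounded independently of $\Nnode$, it suffices to show that one pass through the \textbf{Repeat} loop costs $\mathcal{O}(\Nnode)$. The first part of \cref{ass:graph_complexity} gives that $|E| = \mathcal{O}(\Nnode)$, and the second part gives a bound $s_{\max} = \mathcal{O}(1)$ on the maximum cluster size; these two facts are the only structural inputs needed.

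First I would dispatch the easy pieces. \textsc{balanced-initialization} (\cref{alg:bal_lloyd_init}) consists of a constant number of $\mathcal{O}(\Nnode)$ or $\mathcal{O}(\Ncluster)$ initializations plus a loop over clusters; since $\Ncluster \le \Nnode$, this is $\mathcal{O}(\Nnode)$. For \textsc{clustered-floyd-warshall} (\cref{alg:floyd_warshall}), the cost of processing cluster $a$ is $\mathcal{O}(s_a^3)$ as already noted in the text, and $\sum_a s_a^3 \le s_{\max}^2 \sum_a s_a = s_{\max}^2 \Nnode = \mathcal{O}(\Nnode)$; the initialization loops over $V_a\times V_a$ are similarly $\sum_a s_a^2 = \mathcal{O}(\Nnode)$. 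For \textsc{center-nodes} (\cref{alg:center_nodes}), the work in cluster $a$ is dominated by computing $q_i$ for each $i\in V_a$, which is $\mathcal{O}(s_a^2)$ per cluster, plus $\mathcal{O}(s_a)$ for the remaining loops; again summing gives $\mathcal{O}(\Nnode)$.

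The one subroutine requiring the iteration cap is \textsc{balanced-bellman-ford} (\cref{alg:bal_bell_ford}): here I would invoke $T_{\text{BFmax}}$ directly. The outer \textbf{Repeat} loop runs at most $T_{\text{BFmax}}$ times (this is exactly why the cap was introduced), and each pass performs one loop over all adjacent pairs $(i,j)$ with $W_{i,j}>0$, i.e.\ over $E$, doing $\mathcal{O}(1)$ work per pair — the updates on \cref{line:bal_bell_ford_update_s,line:bal_bell_ford_update_m,line:bal_bell_ford_update_d,line:bal_bell_ford_update_n,line:bal_bell_ford_update_p} are all constant-time array accesses (the cluster-size and predecessor-count bookkeeping is $\mathcal{O}(1)$ precisely because it is incremental rather than recomputed). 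So \textsc{balanced-bellman-ford} costs $\mathcal{O}(T_{\text{BFmax}}\,|E|) = \mathcal{O}(T_{\text{BFmax}}\,\Nnode) = \mathcal{O}(\Nnode)$ since $T_{\text{BFmax}}$ is bounded independently of $\Nnode$. Adding the four contributions gives $\mathcal{O}(\Nnode)$ per outer iteration, and multiplying by $T_{\text{max}}$ completes the proof.

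The main obstacle — really the only subtle point — is the Floyd–Warshall term: one must use $\sum_a s_a^3 = \mathcal{O}(\Nnode)$ rather than a naive $\mathcal{O}(s_{\max}^3 \cdot \Ncluster)$ bound, and verify this is legitimate under \cref{ass:graph_complexity}. Since $\sum_a s_a = \Nnode$ and each $s_a \le s_{\max} = \mathcal{O}(1)$, we indeed have $\sum_a s_a^3 \le s_{\max}^2 \sum_a s_a = \mathcal{O}(\Nnode)$, so the bound holds; everything else is routine bookkeeping of $\mathcal{O}(1)$-cost inner-loop bodies over edge- or node-indexed loops. I would also remark that this is a worst-case-conditional bound: without the second half of \cref{ass:graph_complexity} the cluster sizes and $T_{\text{BFmax}}$ could grow with $\Nnode$, degrading the estimate, as the text already acknowledges.
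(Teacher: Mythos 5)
Your proof is correct and takes essentially the same approach as the paper's: decompose the cost by subroutine, bound the inner loop of \cref{alg:bal_bell_ford} by $|E| = \mathcal{O}(\Nnode)$ (first part of \cref{ass:graph_complexity}) and multiply by $T_{\text{BFmax}} = \mathcal{O}(1)$, bound \cref{alg:floyd_warshall} by $\mathcal{O}(1)$ per cluster (cubic in the bounded cluster size) times $\Ncluster = \mathcal{O}(\Nnode)$, note \cref{alg:center_nodes} is linear, and multiply the per-sweep total by $T_{\text{max}} = \mathcal{O}(1)$. One small quibble: your characterization of $\mathcal{O}(s_{\max}^3\,\Ncluster)$ as a ``naive'' bound that must be avoided is a red herring --- since $s_{\max}=\mathcal{O}(1)$ and $\Ncluster \le \Nnode$ that bound is already $\mathcal{O}(\Nnode)$, which is exactly the estimate the paper uses.
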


\begin{proof}
  This follows from cost estimates for each of the components of~\cref{alg:bal_lloyd_agg}.
  The inner loop of~\cref{alg:bal_bell_ford} has complexity equal to the number of edges in $G$, which is $\mathcal{O}(\Nnode)$ by~\cref{ass:graph_complexity}.  If $T_{\text{BFmax}} = \mathcal{O}(1)$, then~\cref{alg:bal_bell_ford} has complexity $\mathcal{O}(\Nnode)$.  The cost of Floyd-Warshall on each cluster is cubic in the cluster size, which we assume (as a function of the initial centers and clusters) to be $\mathcal{O}(1)$, giving a total cost of $\mathcal{O}(\Ncluster) = \mathcal{O}(\Nnode)$.  \Cref{alg:center_nodes} also has linear complexity.  Since $T_{\text{max}}$ in~\cref{alg:bal_lloyd_agg} is also $\mathcal{O}(1)$, the total complexity is, then, $\mathcal{O}(\Nnode)$.
\end{proof}

\begin{theorem}
  Under \cref{ass:graph_complexity}, \cref{alg:rebalance} terminates with cost at most $\mathcal{O}(N_{\rm node} \log N_{\rm node})$ and, if $T_{\text{max}}$ is bounded independently of $\Nnode$, then \cref{alg:rebal_lloyd_agg} also has $\mathcal{O}(N_{\rm node} \log N_{\rm node})$ total cost.
\end{theorem}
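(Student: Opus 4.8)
The plan is to bound the cost of \cref{alg:rebalance} first, and then assemble the cost of \cref{alg:rebal_lloyd_agg} from this together with the cost of \cref{alg:bal_lloyd_agg} from \cref{thm:bal_lloyd_agg_complexity}. For \cref{alg:rebalance}, I would begin with the two precomputations. \textsc{elimination-penalty} (\cref{alg:elimination_penalty}) iterates, for each cluster $a$, over pairs $(i,j)$ in $V_a$ and then over neighbors $k$ of $j$; under \cref{ass:graph_complexity} the cluster sizes $s_a$ and the vertex degrees are both $\mathcal{O}(1)$, so the work per cluster is $\mathcal{O}(1)$ and the total is $\mathcal{O}(\Ncluster) = \mathcal{O}(\Nnode)$. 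Similarly, \textsc{split-improvement} (\cref{alg:split_improvement}) runs a triple loop over $V_a \times V_a \times V_a$, which is $\mathcal{O}(s_a^3) = \mathcal{O}(1)$ per cluster, hence $\mathcal{O}(\Nnode)$ overall. The two \textsc{argsort} calls on arrays of length $\Ncluster$ cost $\mathcal{O}(\Ncluster \log \Ncluster) = \mathcal{O}(\Nnode \log \Nnode)$, which is the dominant term.

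Next I would bound the \texttt{while} loop in \cref{alg:rebalance}. The key observation is that each iteration either advances $i_{\rm L}$, advances $i_{\rm S}$ (decrementing), hits the \textbf{break}, or performs one elimination/split pairing; in the last case it calls \textsc{mark-unavailable} and then neither pointer necessarily moves, but a pairing permanently consumes (at least) the clusters $a_{\rm L}$ and $a_{\rm S}$, and each subsequent visit to an unavailable cluster advances a pointer. So the number of loop iterations is $\mathcal{O}(\Ncluster)$. Each iteration does $\mathcal{O}(1)$ work except for the two \textsc{mark-unavailable} calls; \textsc{mark-unavailable} on cluster $a$ loops over $i \in V_a$ and over neighbors of $i$, costing $\mathcal{O}(s_a \cdot \text{deg}) = \mathcal{O}(1)$ per call under \cref{ass:graph_complexity}. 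Since each cluster is marked at most $\mathcal{O}(\text{(number of neighboring clusters)}) = \mathcal{O}(1)$ times over the whole run (it only gets marked when it or an adjacent cluster is chosen, and it becomes unavailable afterward, so it is chosen at most once), the total \textsc{mark-unavailable} cost is $\mathcal{O}(\Ncluster)$. Combining, \cref{alg:rebalance} costs $\mathcal{O}(\Nnode) + \mathcal{O}(\Ncluster \log \Ncluster) = \mathcal{O}(\Nnode \log \Nnode)$, and since each pairing strictly decreases $H_\delta$ (by the preceding theorem on \cref{alg:rebalance}) while the \texttt{while} guard and the \textbf{break} bound the iteration count, termination follows — indeed termination is immediate from the finite iteration bound just established.

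For \cref{alg:rebal_lloyd_agg}, each outer iteration calls \textsc{balanced-lloyd-clustering}, which is $\mathcal{O}(\Nnode)$ by \cref{thm:bal_lloyd_agg_complexity} (using that $T_{\text{max}}, T_{\text{BFmax}} = \mathcal{O}(1)$), followed by one call to \textsc{rebalance} at $\mathcal{O}(\Nnode \log \Nnode)$; so each outer iteration is $\mathcal{O}(\Nnode \log \Nnode)$. With $T_{\text{max}}$ bounded independently of $\Nnode$ capping the number of outer iterations, the total is $\mathcal{O}(\Nnode \log \Nnode)$.

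The main obstacle I anticipate is making the loop-iteration count in \cref{alg:rebalance} rigorous: one must argue carefully that the interplay of the two monotone pointers $i_{\rm L}, i_{\rm S}$, the \textbf{continue} branches that skip unavailable or coincident clusters, and the pairings that mark clusters unavailable without moving a pointer, cannot produce more than $\mathcal{O}(\Ncluster)$ iterations. A clean way is an amortized/potential argument: assign a potential counting (advances of $i_{\rm L}$) $+$ (decrements from $N_{\rm cluster}$ of $i_{\rm S}$) $+$ (number of still-available clusters), and check each branch of the loop body strictly decreases it, which is bounded by $\mathcal{O}(\Ncluster)$ initially. The remaining obstacle is verifying the $\mathcal{O}(1)$-per-cluster bounds for \cref{alg:elimination_penalty} and \cref{alg:split_improvement}; these are routine given \cref{ass:graph_complexity}, but the cubic loop in \cref{alg:split_improvement} is the term one must be sure the bounded-cluster-size assumption actually controls.
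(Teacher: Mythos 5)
Your proposal is correct and follows the same decomposition as the paper's proof: bound \textsc{elimination-penalty} and \textsc{split-improvement} at $\mathcal{O}(\Ncluster)$ under the bounded cluster-size and bounded-degree hypotheses of \cref{ass:graph_complexity}, bound \textsc{mark-unavailable} at $\mathcal{O}(1)$ per call, observe that the two \textsc{argsort}s give the dominant $\mathcal{O}(\Ncluster\log\Ncluster)$ term, and then tack on $T_{\text{max}}=\mathcal{O}(1)$ outer iterations calling \textsc{balanced-lloyd-clustering} at $\mathcal{O}(\Nnode)$ by \cref{thm:bal_lloyd_agg_complexity}. The one place you go further than the paper is the explicit argument that the \texttt{while} loop in \cref{alg:rebalance} runs $\mathcal{O}(\Ncluster)$ iterations: the paper simply asserts that ``all other loops and subroutines are $\mathcal{O}(\Nnode)$,'' whereas you note that each iteration advances a pointer, breaks, or performs a pairing that marks its two target clusters unavailable (forcing pointer advances on the next visits), and you sketch a potential argument $\Phi = (\Ncluster - i_{\rm L}) + i_{\rm S} + \lvert\{a : M_a\}\rvert$ that strictly decreases each iteration. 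That is a genuine gap-filling detail the paper leaves implicit, and your potential-function formalization is the right way to make it rigorous; the informal claim in your middle paragraph that ``each cluster is marked at most $\mathcal{O}(1)$ times'' is slightly muddled, but the amortized bound you invoke afterward (pairings $\le$ pointer advances $= \mathcal{O}(\Ncluster)$, each calling \textsc{mark-unavailable} twice at $\mathcal{O}(1)$ cost) is what actually carries the argument and is sound.
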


\begin{proof}
  From \cref{thm:bal_lloyd_agg_complexity}, the cost of \cref{alg:bal_lloyd_agg} is $\mathcal{O}(N_{\rm node})$. Both \cref{alg:elimination_penalty} and \cref{alg:split_improvement} iterate over all clusters and perform bounded work per cluster (using the bound on cluster size from \cref{ass:graph_complexity}), so they have cost $\mathcal{O}(\Ncluster) = \mathcal{O}(N_{\rm node})$. Similarly, \cref{alg:mark_unavailable} has cost independent of $\Nnode$ because it only iterates over nodes within a single cluster. The cost of \cref{alg:rebalance} is thus $\mathcal{O}(N_{\rm node} \log N_{\rm node})$ because \cref{line:rebalance_argsort_L,line:rebalance_argsort_S} are $\mathcal{O}(N_{\rm cluster} \log N_{\rm cluster})$ and all other loops and subroutines are $\mathcal{O}(N_{\rm node})$. Finally, assuming $T_{\text{max}} = \mathcal{O}(1)$, we have the same cost for \cref{alg:rebal_lloyd_agg}.
\end{proof}

\begin{remark}
  The algorithmic complexity of \cref{alg:rebalance} and, hence, that of \cref{alg:rebal_lloyd_agg}, can be reduced to $\mathcal{O}(N_{\rm node})$ by changing the algorithm to separately treat fixed-size sets of clusters. That is, rather than considering all clusters at once, partition the set of clusters into subsets and run \cref{alg:rebalance} separately on each subset. This will avoid the $\mathcal{O}(N_{\rm node} \log N_{\rm node})$ sorts in~\cref{line:rebalance_argsort_L,line:rebalance_argsort_S}, leaving the cost as linear in $N_{\rm node}$. We expect this would lead to some slight reduction in the quality of the rebalance, because eliminate/split pairings will only be considered within a subset but, for large subsets, we would not expect this to make a significant difference. This subset approach is also the natural way to parallelize \cref{alg:rebalance}, with one subset per processor.
\end{remark}

\begin{remark}
Parallelization of~\cref{alg:bal_lloyd_agg,alg:rebal_lloyd_agg} relies on parallelization of the other underlying algorithms.  Both~\cref{alg:floyd_warshall,alg:center_nodes} operate independently on each cluster and are, thus, naturally parallelizable.  \Cref{alg:bal_bell_ford} could be naturally parallelized by applying it independently to the set of nodes owned by each processor in a parallel decomposition.
\end{remark}

\section{Numerical Results}\label{sec:numerics}

In this section, we highlight the value of balanced Lloyd clustering with
rebalancing for smoothed aggregation multigrid.  All
computations are performed with PyAMG~\cite{BeOlSc2022}.
Unless stated otherwise, all results below consider a standard Poisson problem of form
\begin{subequations}\label{eq:modelproblem}
  \begin{align}
    -\nabla\cdot\nabla U & = F\quad\text{in $\Omega$},\\
    \vec{n}\cdot \nabla U & = 0\quad\text{on $\partial\Omega$},
  \end{align}
\end{subequations}
where Neumann boundary conditions are used to highlight clustering near the
boundary.\footnote{With Dirichlet conditions, we would observe ``singleton''
  clusters for the isolated points.  This does not impact the method, only
visualization.}  \Cref{eq:modelproblem} is discretized using either standard $P^1$
linear finite elements on a triangulation of the domain, $\Omega$, or $Q^1$ bilinear finite elements on a quadrilateral mesh of $\Omega$, yielding a matrix problem of the form
\begin{equation}\label{eq:linearsystem}
  A u = f.
\end{equation}
In the following convergence tests, $f$ is set to zero and a random approximation
to $u$ is used to initialize the AMG cycling.

We consider three main cases of clustering in the context of AMG\@:
standard Lloyd clustering (\cref{alg:lloyd_agg}),
balanced Lloyd clustering (\cref{alg:bal_lloyd_agg}), and
balanced Lloyd clustering with rebalancing (\cref{alg:rebalance}).
For each of these, we require a definition of the weight matrix, $W$, and the
number of clusters, $\Ncluster$.  In each case, we bound the number of inner
iterations of Lloyd clustering at five and the number of rebalance sweeps at
four; in practice, this is a conservative bound and the iterations complete much
earlier (due to no change in the clustering state).

To form the weight matrix, $W$, we consider the so-called \textit{evolution}
measure~\cite{2010_OlScTu_evosoc} which associates a value of strength
for each edge in the graph of $A$ in~\cref{eq:linearsystem} based on smoothing
properties. This leads to an initial non-negative weight matrix, $\widehat{W}$, where a large edge value
$\widehat{W}_{i,j}$ indicates that nodes $i$ and $j$ should be clustered together.
The algorithms above make use of an assumption that the graph associated with $W$ is connected, but this is not guaranteed to be the case for that of $\widehat{W}_{i,j}$.   Thus, we augment $\widehat{W}$
with a small padding for each edge in $A$, defining $\widetilde{W}$ as
\begin{equation}
  \widetilde{W}_{i,j} \gets \widehat{W}_{i,j} + 0.1\quad\text{if $A_{i,j}\ne 0$}
\end{equation}
The Lloyd-based clustering presented here is based on shortest distances in the graph of $W$.  As a
result, we consider the inverse strength as a proxy for distance.  This results in defining $W$ so that
\begin{equation}
  W_{i,j} = \frac{1}{\widetilde{W}_{i,j}} \text{ if }\widetilde{W}_{i,j}\neq 0,
\end{equation}
so that strong edges refer to shorter distances.  With this inversion, the additional padding added above
indicates a long distance in the weight matrix.

\subsection{Varying cluster numbers}

While Greedy and MIS-based clustering have been used successfully in
many settings, they do not provide a mechanism to control the number
of resulting clusters.  Here,
we explore the ability of Lloyd clustering to target
specific numbers of clusters.
As motivation, consider the model problem on an unstructured triangulation of the unit disk with \num{10245} vertices and
\num{20158} elements.  We construct multigrid hierarchies using rebalanced Lloyd clustering, setting
the target number of points in each cluster at each level to a fixed value between 3
and 20.  We estimate the asymptotic convergence
factor by the geometric mean of the last five residual norms at convergence,
say $k$ iterations:
\begin{equation}
  \rho = \left(\frac{\|r^{(k)}\|}{\|r^{(k-5)}\|}\right)^{\frac{1}{5}},
\end{equation}
where $r^{(k)} = f - Au^{(k)}$ is the residual vector after $k$ iterations.
Combined with a model for the \textit{cost} of each multigrid cycle, given by
the total number of non-zeros in the sparse-matrix operations in the cycle
(i.e., the \textit{cycle complexity}, $\chi$), this leads to a measure of the work per
digit of accuracy (WPD) for the method:
\begin{equation}
  \text{WPD} = \frac{\chi}{-\log_{10}(\rho)}.
\end{equation}
\Cref{fig:disc_wpd_stats} shows that the efficiency (and effectiveness) of an AMG method
can vary depending on the (average) number of points per cluster; in this case,
we observe that very small clusters lead to rapid convergence (small $\rho$), yet
due to the slower coarsening, the total complexity of the multigrid cycle is higher.
\begin{figure}
  \centering
  \includegraphics{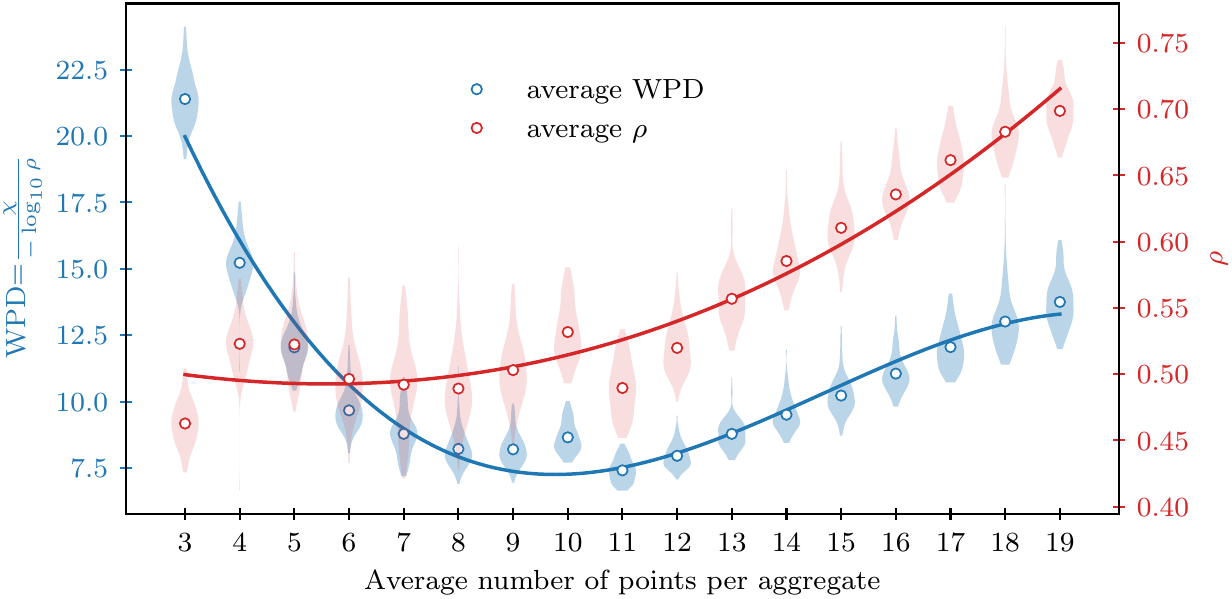}
  \caption{Work per digit (WPD) of accuracy and convergence $\rho$
for clustering sizes ranging from 3--19 points per cluster (on average)
using rebalanced Lloyd clustering.
The average over 100 runs is marked $\circ$ and a trendline from
a smoothed cubic spline is given for the mean (solid).}\label{fig:disc_wpd_stats}
\end{figure}

In the end, balanced Lloyd clustering with rebalancing leads to well-formed clusters
and the ability to use a vast range of cluster sizes.  \Cref{fig:disc_varynaggs} illustrates
a range of cases for a smaller mesh of the same domain, from five (large) clusters at one extreme to 250 small clusters including
singleton and many pairwise clusters.  True pairwise clustering~\cite{doi:10.1137/100818509, doi:10.1137/120876083} is
not represented; however, it remains an open question whether a Lloyd-type algorithm could
render nearly pairwise clustering using modified criteria for tiebreaking and rebalancing.
\begin{figure}
  \centering
  \includegraphics{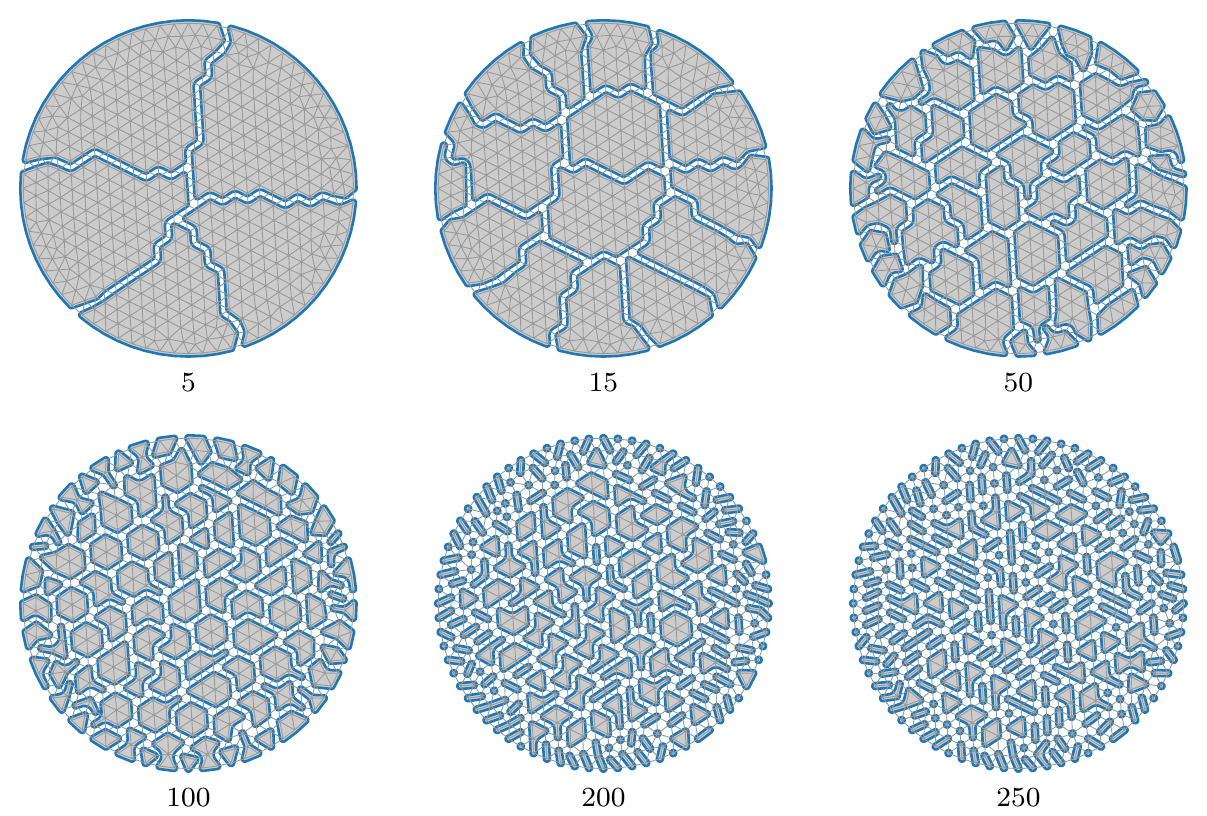}
  \caption{Example clustering patterns with the number of clusters ranging from 5 to 250
           using rebalanced Lloyd clustering.}\label{fig:disc_varynaggs}
\end{figure}

\subsection{Tiebreaking}

\Cref{alg:bal_bell_ford} introduces
``tiebreaking'' on \cref{line:bal_bell_ford_cond2b}. If a node
in the graph
is equidistant from multiple centers,
then the node becomes a member of the
neighboring cluster if the neighboring cluster is smaller by two in size
than the current cluster of the node.
Tiebreaking in the balanced Bellman-Ford algorithm
impacts the uniformity of the sizes of the clusters.
To quantify uniformity,
we consider discretizing~\cref{eq:modelproblem} on a uniform $64\times64$ quadrilateral mesh.
We cluster the nodes
using the balanced Bellman-Ford algorithm \textit{with} and \textit{without}
tiebreaking, requesting the number of clusters be equal to 10\% of the fine-grid number of nodes (rounded down when this is not an integer). We randomly distribute the initial seeding \num{1000} times
and, in each case, compute the following metrics:
the number of zero diameter clusters (i.e., singleton clusters),
the standard deviation in the number of nodes per cluster,
and the energy for each clustering (defined by~\cref{eq:energy}).

\Cref{fig:square_diamters_zero}
shows the number of clusters having zero diameter with and without tiebreaking,
highlighting that tiebreaking substantially decreases the number of clusterings with zero
diameter clusters, from over one-third of clusterings to about one percent.
Likewise,
\cref{fig:square_diameters_energy} (left)
shows the effect of tiebreaking on the distribution of the standard
deviation in the number of nodes. Here,
tiebreaking leads to a decrease
yielding clusters more uniform in size.
Tiebreaking also contributes to clusters that are more round~---~this is supported by
\cref{fig:square_diameters_energy} (right),
where we see that tiebreaking decreases the energy of the system.  We
emphasize that tiebreaking is an inexpensive strategy that clearly
improves performance of the clustering.
\begin{figure}
  \centering
  \includegraphics{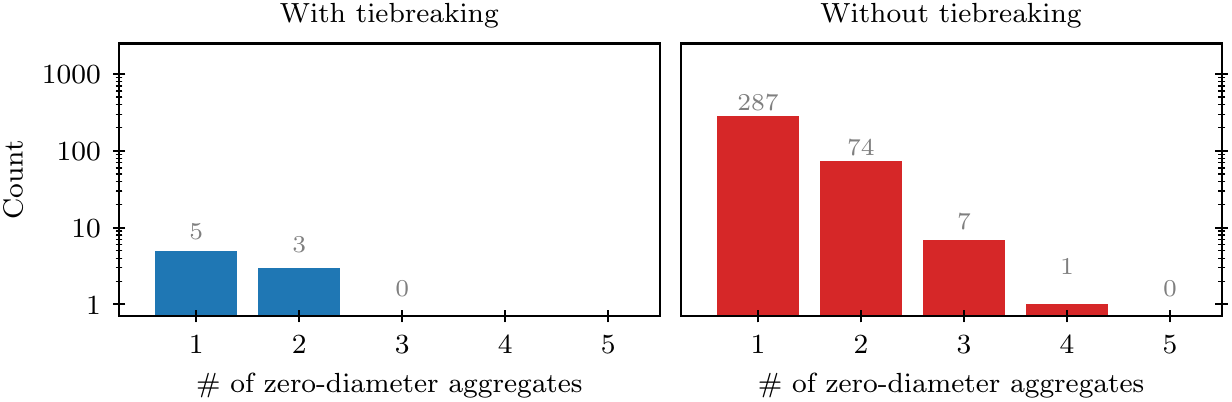}
\caption{Distribution of the number of clusters having zero diameter for
  balanced Lloyd clustering with or without tiebreaking for a $64\times 64$ mesh.}\label{fig:square_diamters_zero}
\end{figure}
\begin{figure}
  \centering
  \includegraphics{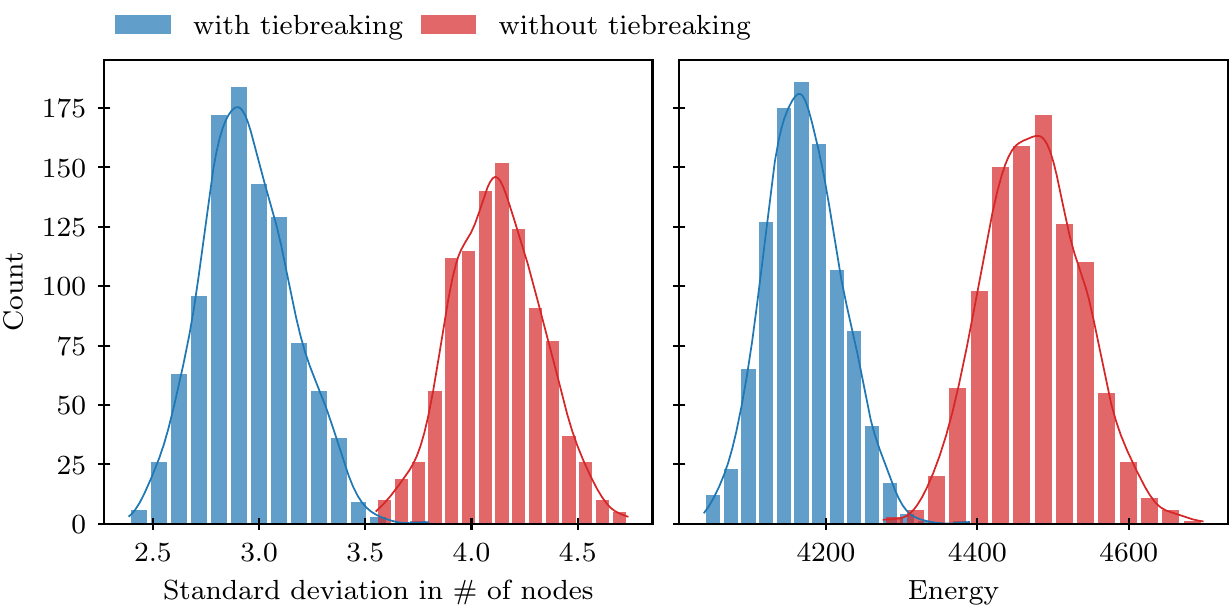}
  \caption{Distribution of the standard deviation in the number of nodes
    and distribution of energy
    for balanced Lloyd clustering with or without tiebreaking on a $64\times 64$ mesh.}\label{fig:square_diameters_energy}
\end{figure}

\subsection{Rebalancing}

To quantify the improvements in cluster quality as we move from standard to balanced, and then to rebalanced Lloyd clustering, we again consider a $64\times64$
quadrilateral mesh. Nodes in this mesh are clustered
using the three
methods, again using 10\% of $\Nnode$ to determine $\Ncluster$. In each case, the clustering is repeated
1000 times, yielding a standard deviation of cluster diameter,
standard deviation of number of nodes in clusters, and
energy for each test. The results are averaged
and the same experiment is
performed for $16\times16$, $32\times32$, and $128\times128$
meshes.

\Cref{fig:square_stats_comparison_64}
shows the distributions for each method in the case of a
$64\times64$
mesh.
Lower standard deviation of diameter and
standard deviation of number of nodes suggest that the clusters
that result
from rebalanced Lloyd are more uniform in shape and
size compared to the other methods.
This is also reflected by the lower energy
for rebalanced Lloyd clustering.
The figure also highlights that \textit{variation} in the metrics is
lower for rebalanced Lloyd, pointing to the consistency in the method over multiple runs.

\Cref{fig:square_stats_comparison_all_n}
shows the difference between the maximum and
minimum diameters and the energy, averaged over 1000 samples,
for each of the clustering methods as we vary problem size.
The figures underscore that rebalanced Lloyd
yields more
uniform, rounded clusters having less energy than the
other two clustering methods as the mesh size grows.
\begin{figure}
  \centering
  \includegraphics{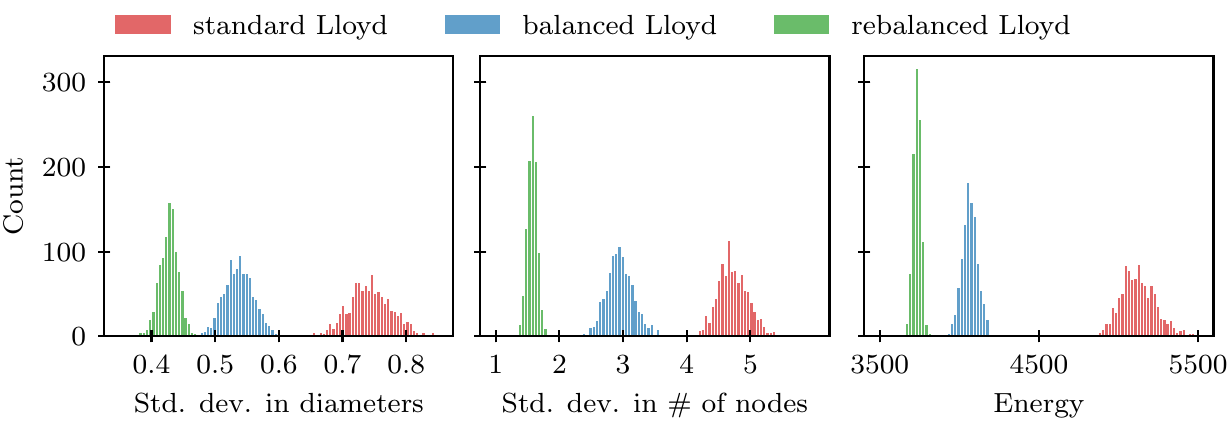}  %
  \caption{Distribution of standard deviation in diameters,
           distribution of standard deviation in number of nodes, and
           distribution in energy
    for different clustering methods for a $64\times 64$ mesh.}\label{fig:square_stats_comparison_64}
\end{figure}
\begin{figure}
  \centering
  \includegraphics{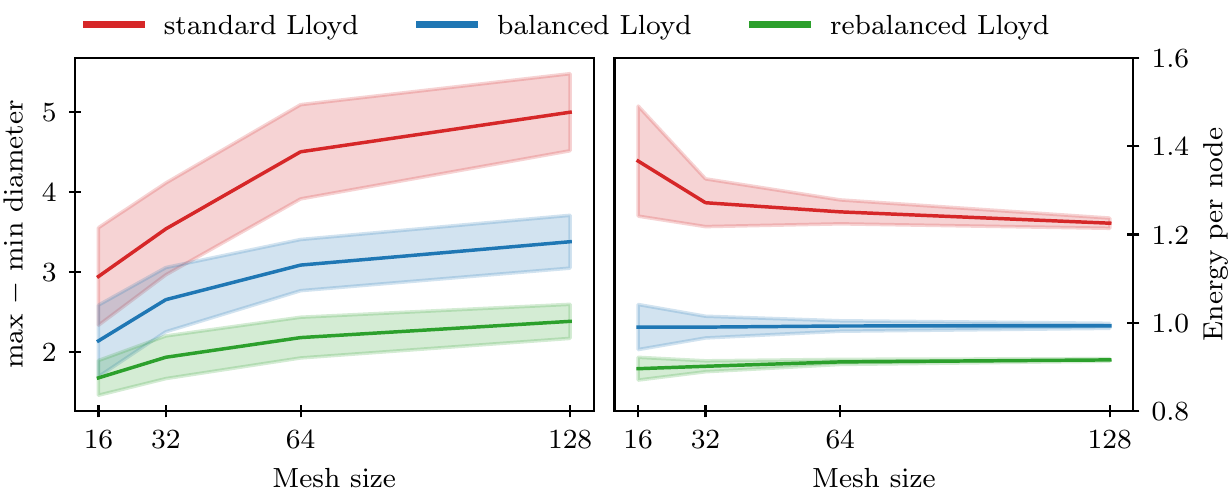}
  \caption{(Left) Difference between maximum and minimum diameters of clusters averaged over 1000 samples; (Right) Energy per node averaged over 1000 samples.  The shaded regions mark one standard deviation from the mean.}\label{fig:square_stats_comparison_all_n}
\end{figure}

\subsection{Impact of seeding}

Consider a 1D (symmetric) graph with unit edge weights and 30 nodes, as shown
in~\cref{fig:oned_example}.  We seek 10 clusters with two different initial
seedings: a worst-case seeding with all initial centers stacked at the left
(that is, $c_i = 0,\ldots,9$) and a random seeding. In this case, we can
identify an optimal clustering of equal-sized clusters (with energy $H=20.0$).

The worst-case seeding requires a very high number of iterations to
substantially reduce the energy, because each iteration only redistributes nodes
between adjacent clusters and we require ``relaxation'' over the entire domain.
Because of this we expect at least $\mathcal{O}(N_{\rm node})$ iterations to
achieve a reasonable clustering for such seedings, for a total cost of
$\mathcal{O}(N_{\rm node}^2)$ or higher. We also see that the balanced algorithm
becomes trapped in a poor local minimum ($H = 113.0$, down from an initial value
of $H = 2870.0$) and requires multiple rebalancing steps to achieve a reasonable
clustering. It is worth noting that even in this extreme case the rebalancing
algorithm does achieve a reasonable final clustering ($H = 29.0$).

In contrast, the random seeding starts with a good coverage of the graph and a
correspondingly lower energy ($H = 38.0$). From such a seeding, the algorithm
terminates rapidly at a reasonable final clustering ($H = 26.0$). We expect that
the number of iterations for such random seedings will be independent of the
number of nodes, for an overall $\mathcal{O}(N_{\rm node})$ cost.
\begin{figure}

  \hrule
  \medskip

  {\scriptsize \textbf{Worst-case seeding}}
  \medskip

  \includegraphics{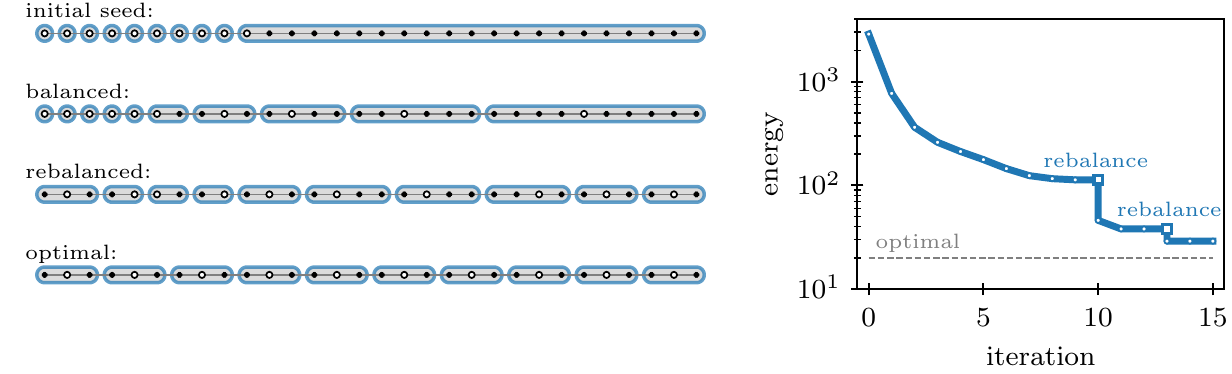}

  \hrule
  \medskip
  {\scriptsize \textbf{Random seeding}}
  \medskip

  \includegraphics{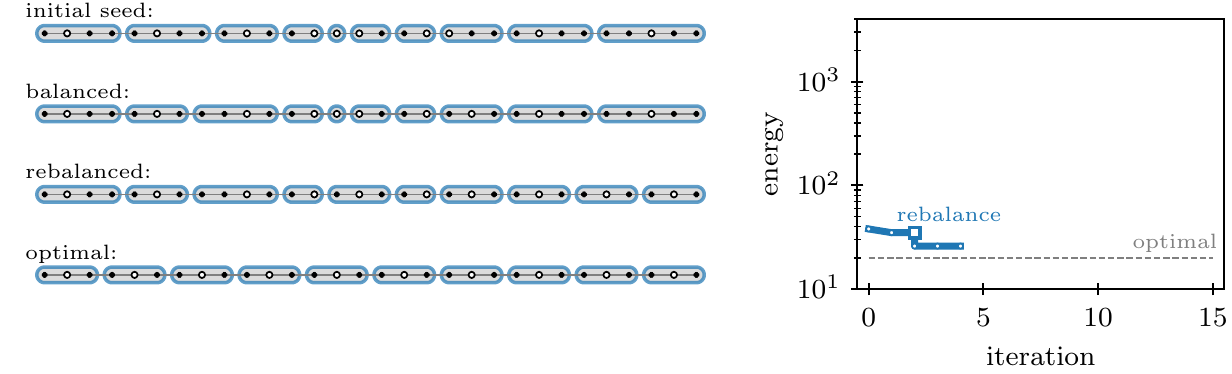}
  \hrule
  \caption{
    Initial seedings with (top) a worst-case ($c_i=0,\ldots,9$) and (bottom) random.
    (left) Cluster centers $c_i$ are given by
    \protect\tikz[baseline=-0.5ex]{\protect\draw[black,fill=white,line width=1.0pt,radius=1.5pt] (0,0) circle;}
    and clusters are marked with
    \protect\tikz[baseline=-0.5ex]{\protect\draw[tab-blue,fill=tab-gray!30,line width=1.0pt] (0,0) circle [x radius=8pt, y radius=4pt];}.  (right) Each curve represents iterations by balanced Lloyd clustering, with rebalance steps marked by
    \protect\tikz[baseline=-0.5ex]{\protect\draw[black,fill=white,line width=1.0pt] (-0.05,-0.05) rectangle (0.05, 0.05);}.
  }\label{fig:oned_example}
\end{figure}

\subsection{Algebraic multigrid convergence}

In the application of clustering to algebraic multigrid (see~\cref{app:AMG}), cluster quality
plays an important role in overall convergence of the method, but one
that is not yet quantified by existing sharp measures.
While we can
easily confirm improvement (or degradation) in the measured
convergence factor after making a change to a clustering, it is
difficult to directly assess if an individual cluster is the cause
of poor convergence.

One way to \textit{localize} a bound on AMG
convergence is to consider the classical bound based on smoothing and
approximation properties~\cite{JWRuge_KStuben_1987a, SMacLachlan_LOlson_2014a}.  This theory
considers the convergence of a two-grid cycle with post-relaxation
given by $u \leftarrow u + M(f-Au)$ and coarse-grid correction given
by $u \leftarrow u + P(P^T A P)^{-1}P^T(f-Au)$.  We write $G = I-MA$ and
$T = I-P(P^T A P)^{-1}P^T A$ as the error-propagation operators of
relaxation and coarse-grid correction, respectively, with the
error-propagation operator of the two-grid scheme given by $GT$.  The
diagonal of SPD matrix $A$ is denoted by $D$.  In what follows, we
assume that $A$ is SPD, $P$ is of full rank, and $\|G\|_A < 1$.
Theorem 4 of~\cite{SMacLachlan_LOlson_2014a} shows that if there exist
constants $\alpha,\beta>0$ such that
\begin{align*}
\|Ge\|_A^2 & \leq \|e\|_A^2 - \alpha \|e\|_{AD^{-1}A}^2 \text{ for all
}e,\\
\text{and }\|Te\|_A^2 & \leq \beta \|Te\|_{AD^{-1}A}^2 \text{ for all }e,
\end{align*}
then $\|GT\|_A \leq (1-\alpha/\beta)^{1/2}$.  The first of these is
known as the \textit{smoothing property,} since it
concerns the action of relaxation, $G$, on errors, $e$.  The second is
referred to as the \textit{approximation property}, since it
quantifies the action of the coarse-grid correction process.  Equations (19) and (20)
of~\cite{SMacLachlan_LOlson_2014a} show that this approximation property is guaranteed by the existence of a constant $\beta > 0$
such that $\inf_{e_c}\|e-Pe_c\|_D^2 \leq \beta\|e\|_A^2$ for all $e$.
Choosing $e_c = (P^T D P)^{-1}P^T De$ and defining $T_D = I -
P(P^T D P)^{-1}P^T D$ then allows us to quantify such a $\beta$ as
\begin{equation}\label{eq:beta_def}
  \beta = \sup_{Ae\neq 0} \frac{ e^T T_D^T D T_D e}{e^T Ae}.
\end{equation}

We find this $\beta$ by solving for the largest eigenvalue of the
generalized eigenvalue problem $T_D^T D T_D e = \lambda Ae$, and let $e$
be the associated eigenvector.  To localize the measure over a single
cluster, we decompose the inner product in the numerator into a sum
over clusters, writing $\beta = \sum_{a=1}^{N_{\rm cluster}}
\beta_a$, where
\begin{equation}\label{eq:betaopt2}
\beta_a = \frac{\left(\sum_{j\in V_a} (D T_D  e)_j (T_D e)_j \right)}{e^T Ae}
\end{equation}
This comes from writing the numerator of~\eqref{eq:beta_def} as the
inner product of $DT_D e$ with $T_D e$, and then localizing the
summation in that inner product over each cluster.

We again consider the Poisson problem on a triangulation of the unit disk with 528 unknowns,
and compute \cref{eq:betaopt2} for each cluster generated by each method.
\Cref{fig:disc_compareenergy_aggregates} shows that the extreme values of $\beta_a$ are reduced through
rebalancing.  Indeed, this is reflected in the convergence shown in
\cref{fig:disc_compareenergy_convergence}, where we observe a dramatic reduction
in the number of iterations for solvers with these clusters.  It is, of course, important to note that not \textit{every}
clustering generated by standard Lloyd exhibits similarly poor performance.  The quality of the initial clustering used to seed the algorithm plays an important role in determining the multigrid performance.  Results seen here are for a representative, randomly generated, initial seeding.
\begin{figure}
  \centering
  \includegraphics{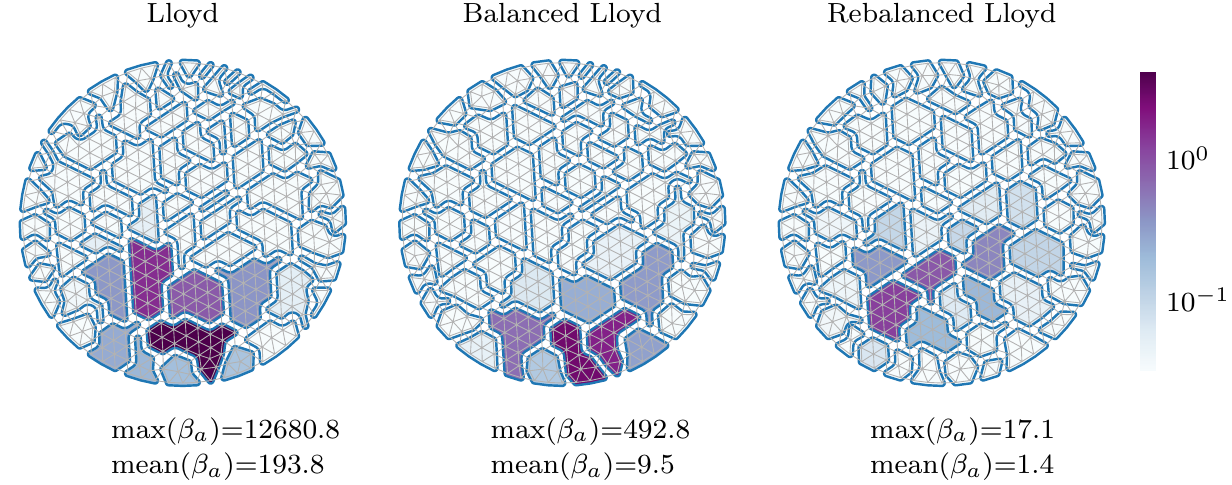}
  \caption{Localizing $\beta$ to each cluster via \cref{eq:betaopt2}.}\label{fig:disc_compareenergy_aggregates}
\end{figure}
\begin{figure}
  \centering
  \includegraphics[width=0.5\textwidth]{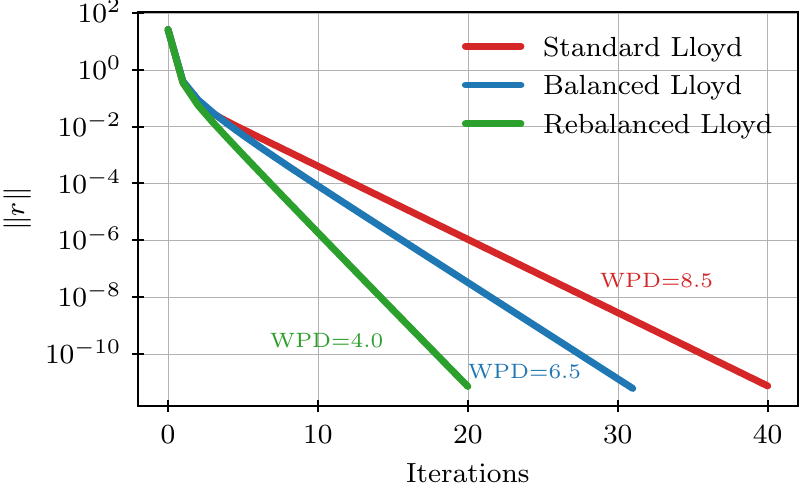}
  \caption{Example convergence for two-level AMG with different clusterings.}\label{fig:disc_compareenergy_convergence}
\end{figure}

\subsection{Additional problems in Algebraic Multigrid}

As additional evidence of the effectiveness of rebalanced Lloyd clustering, we
consider several examples in both 2D and 3D.

{\bf 3D restricted channel:}
    The 3D domain $\Omega$ is defined by a spline on the points
    \begin{equation*}
    \left[(0, 4, -8),
           (0, 4, -6),
           (0, 1,  0),
           (0, 4,  6),
           (0, 4,  8)\right],
    \end{equation*}
    rotated about the $z$-axis, (see~\cref{tab:additional_examples}).
    A 3D tetrahedral mesh with \num{16921} elements is generated with
    Gmsh~\cite{gmsh} through pygmsh~\cite{Schlomer_pygmsh_A_Python}.
We use Firedrake~\cite{Rathgeber2016} to
discretize~\eqref{eq:modelproblem} with linear finite elements on
tetrahedra, and select an average of 25 points per cluster.

{\bf 2D restricted channel:} The 2D domain $\Omega$ is defined by $[-2,
    2]\times[-1, 1]\setminus C$ with $C = C^+ \cup C^-$, for $C^\pm$
    representing discs of radius 0.8 at $(0, \pm 1)$
    (see~\cref{tab:additional_examples}).  As for the 3D restricted channel, we use
    Gmsh to generate a graded, triangular mesh with \num{5832} elements, with a
    characteristic length of 0.012 at the center and growing to 0.12 at the
    left/right edges. This forces tighter clustering toward the center, as
    shown in~\cref{tab:additional_examples}. The discretization matrix
    for~\eqref{eq:modelproblem} is
    constructed with linear finite elements, and we target clusters of
    size 8.

{\bf 2D anisotropic diffusion:} The 2D domain is defined by the unit
square, and we consider the problem $-\nabla\cdot K \nabla u = f$ with
pure Dirchlet conditions.  We define the anisotropic diffusion tensor
as
$
K =
  \begin{bmatrix}
    \cos\theta & -\sin\theta\\
    \sin\theta &  \cos\theta
  \end{bmatrix}
  \begin{bmatrix}
    1 & \\
      & \varepsilon
  \end{bmatrix}
  \begin{bmatrix}
    \cos\theta & -\sin\theta\\
    \sin\theta &  \cos\theta
  \end{bmatrix}^T
$,
for $\varepsilon = 0.1$ and $\theta = \pi/3$.  We discretize this on a
$42\times 42$ uniform mesh  (with \num{1681} elements) and
Q1 bilinear elements, and specify a target cluster size of 12.

{\bf P2 elements:}  The 2D domain is a unit disc, on which we consider~\eqref{eq:modelproblem}.  A triangular mesh is constructed with \num{982}
elements and P2 quadratic finite elements are used to generate the discretization
matrix.  We specify 5 nodes per cluster.

In each of the examples of~\cref{tab:additional_examples}, a zero right-hand
side is used to assess convergence of the smoothed aggregation multigrid
solver.  From the convergence histories, we see that rebalanced Lloyd
clustering improves solver convergence, even for these relatively benign
problems.  For the restricted channel problems, the resulting clustering
resembles the expected isotropic behavior with well rounded clusters.
Likewise, in the case of anisotropy, we see that the clustering mimics the
diffusion direction, while maintaining balance across clusters. Finally, the
P2 case reveals the benefit of specifying the coarsening ratio: in this
case, the coarsening ratio of 1/5 outperforms greedy coarsening (which
yields a ratio of around
1/10).

As a final example, \cref{tab:additional_examples} highlights a parallel
partitioning of an arc heated combustion channel at the University of Illinois Urbana-Champaign\footnote{
\url{https://tonghun.mechse.illinois.edu/research/hypersonics-act-ii/}, \url{https://ceesd.illinois.edu/}
}. In this case, rebalanced Lloyd effectively partitions the ${\sim}100$k mesh elements, keeping refined features such as the injector local to a cluster.

\begin{table}
  \centering
\setlength{\aboverulesep}{-0pt}
\setlength{\belowrulesep}{-0pt}

\begin{tabular}{
     m{0.26\textwidth} @{\hspace{0.1\tabcolsep}}
     m{0.4\textwidth} @{\hspace{0.1\tabcolsep}}
     m{0.3\textwidth}
  }
  \toprule
  \parbox{1.8cm}{3D\\ restricted channel} &
  \includegraphics[width=\dimexpr\linewidth-2\fboxsep-2\fboxrule\relax]
    {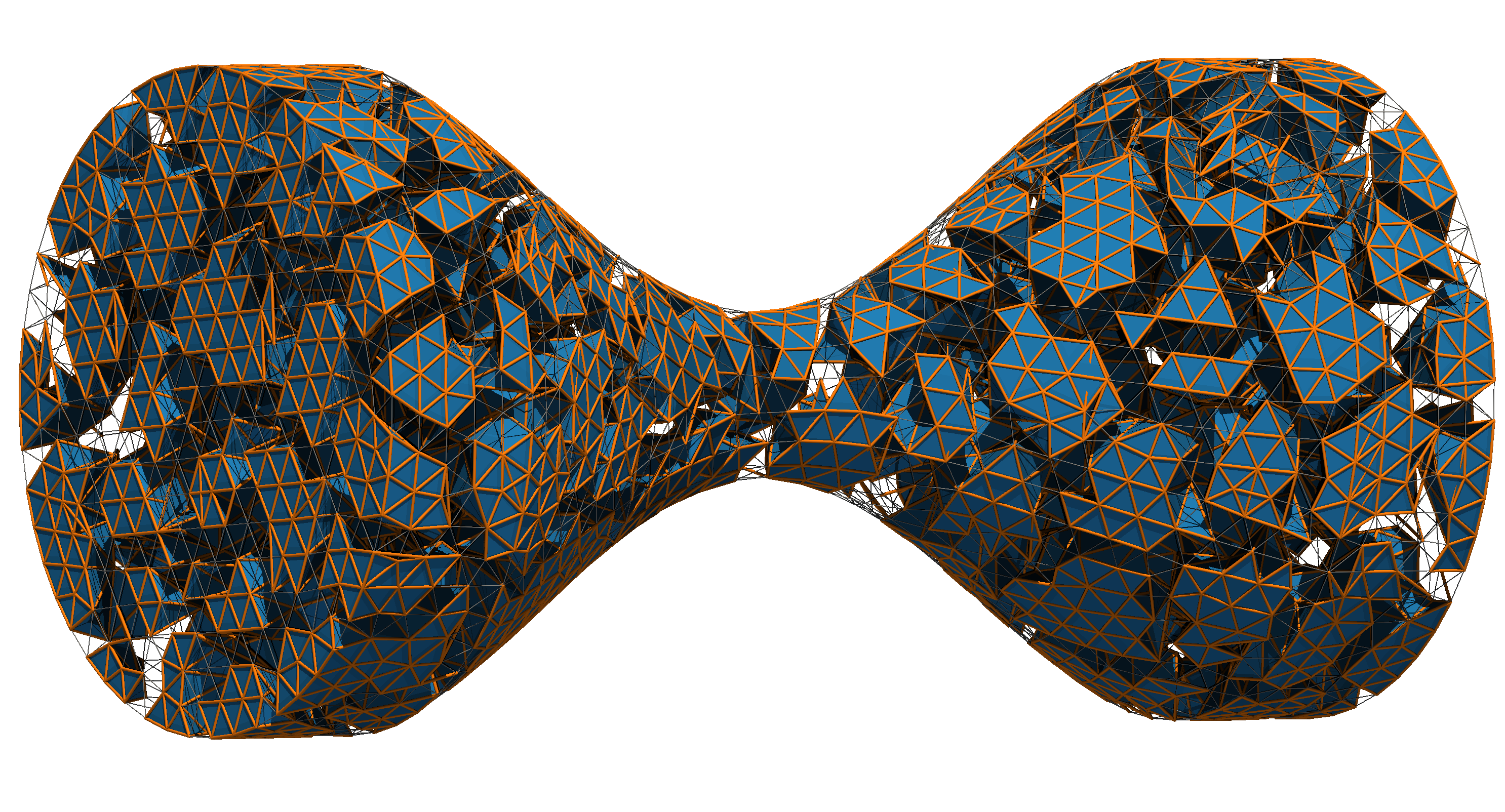}
                        &
  \includegraphics[width=\dimexpr\linewidth-2\fboxsep-2\fboxrule\relax]
   {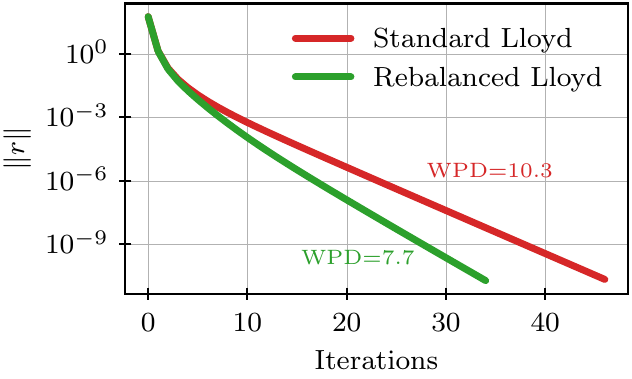}
  \\
  \midrule
  \parbox{1.8cm}{2D\\ restricted channel} &
  \includegraphics[width=\dimexpr\linewidth-2\fboxsep-2\fboxrule\relax]
   {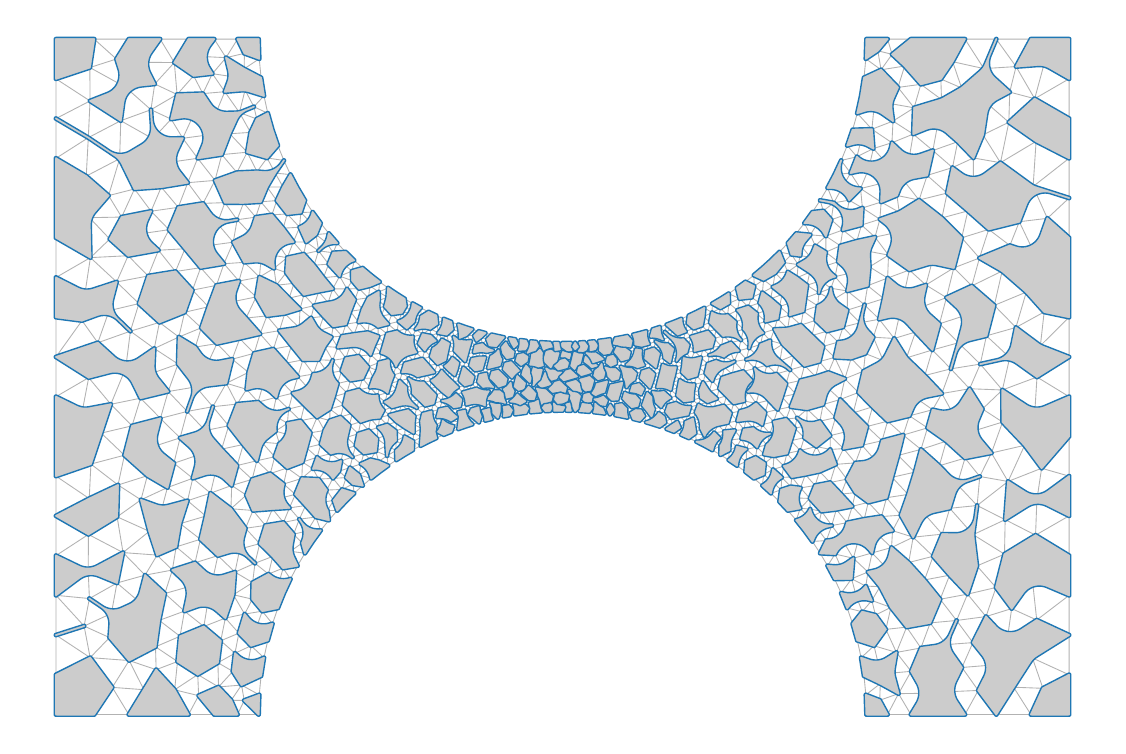}
  &
  \includegraphics[width=\dimexpr\linewidth-2\fboxsep-2\fboxrule\relax]
   {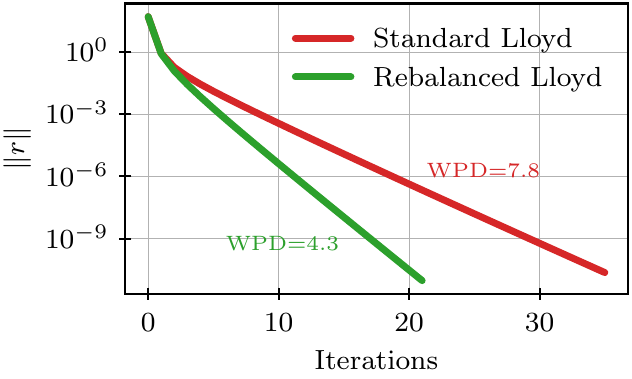}
  \\
  \midrule
  \parbox{1.8cm}{2D\\ anisotropic diffusion} &
  \includegraphics[width=\dimexpr\linewidth-2\fboxsep-2\fboxrule\relax]
   {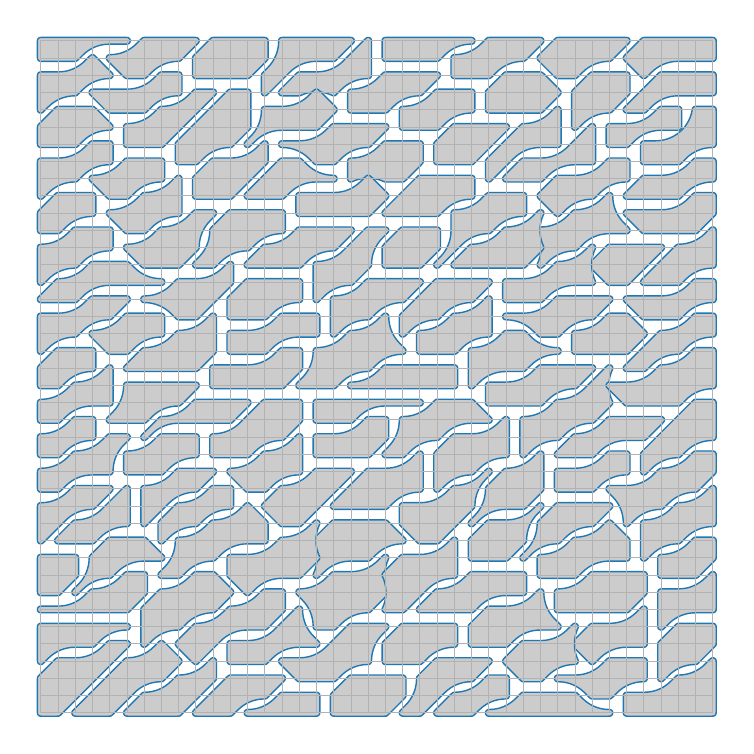}
  &
  \includegraphics[width=\dimexpr\linewidth-2\fboxsep-2\fboxrule\relax]
   {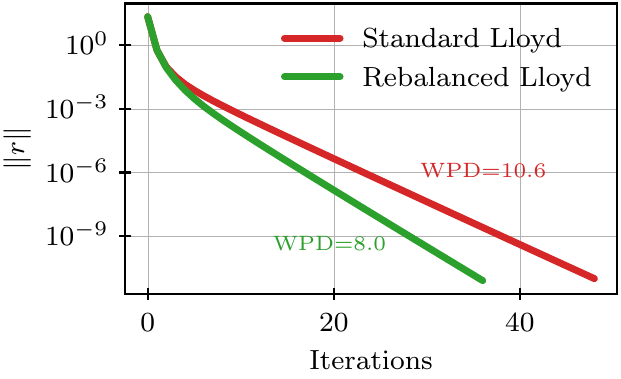}
  \\
  \midrule
  \parbox{1.8cm}{P2\\ elements} &
  \includegraphics[width=\dimexpr\linewidth-2\fboxsep-2\fboxrule\relax]
   {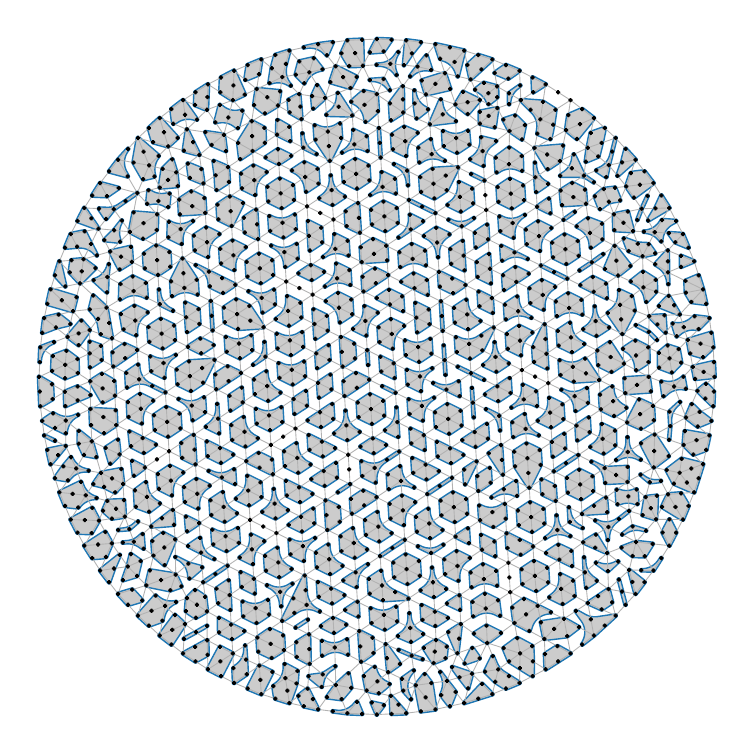}
  &
  \includegraphics[width=\dimexpr\linewidth-2\fboxsep-2\fboxrule\relax]
   {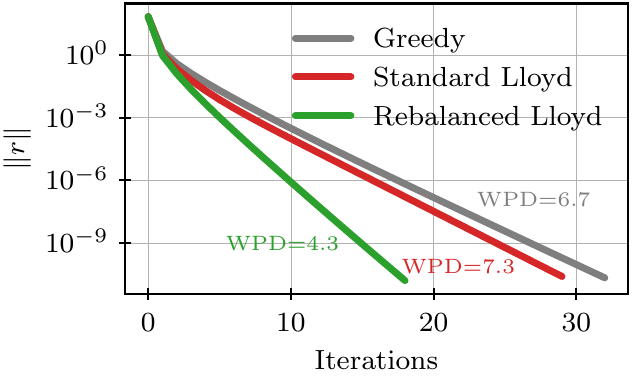}
  \\
  \midrule
  \multicolumn{3}{c}{
  ACTII mesh
  }\\
  \multicolumn{3}{c}{
    \includegraphics[width=0.95\linewidth]{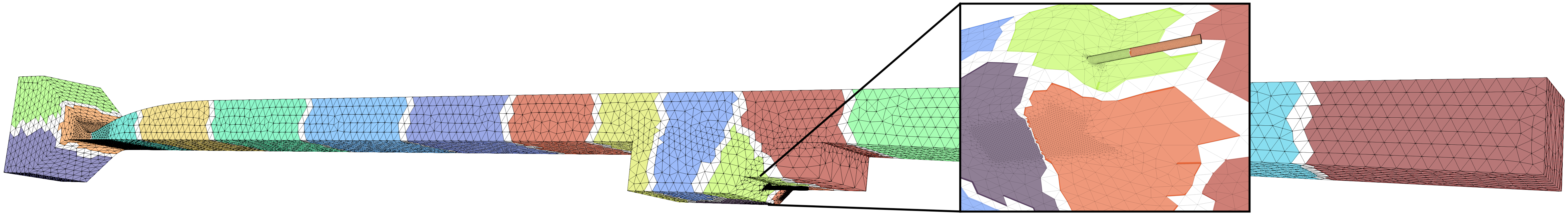}
  }\\
  \bottomrule
\end{tabular}
\caption{Additional examples.
  \textit{ACTII mesh credit}: Mike Anderson at UIUC.
}\label{tab:additional_examples}
\end{table}

\section{Conclusions and extensions}

In this paper, we study and extend the use of Lloyd's algorithm for determining
clusters in graphs.  Our proposed \textit{balanced} and \textit{rebalanced}
Lloyd clustering algorithms are linear in time, guarantee connected clusters,
and are consistent with minimizing a quadratic energy functional.
In addition, the algorithms are implemented in Python/C++ and are available
through the open source project PyAMG~\cite{BeOlSc2022}.
One major
topic for future work is the choice of that energy functional; while the steps
in the algorithms above are consistent with an $\ell^2$-distance style energy,
they can easily be extended to other energy functionals in a consistent way.
Theoretical guidance is clearly needed to determine the proper choice of such a
functional.  We also note that we consider only serial algorithms in this
paper; properly extending these approaches to their parallel counterparts is
also an important subject for future research.

\bibliographystyle{siamplain}
\bibliography{refs-lloyd}

\appendix

\section{Review of algebraic multigrid methods}\label{app:AMG}
Algebraic multigrid methods seek to approximate solutions to sparse linear
systems of the form
\begin{equation}
  A u = f
\end{equation}
for $A\in\mathbb{R}^{\Nnode\times\Nnode}$, and $u,f\in\mathbb{R}^{\Nnode}$. Here, we outline \textit{aggregation}-based AMG
methods for use as an application in the development of Lloyd-style clustering.
The set of indices, $\{1,\dots,\Nnode\}$, enumerate the degrees of freedom
(DoFs) and represent the fine level in the multilevel grid hierarchy. This set
is partitioned and grouped into disjoint clusters, see \cref{def:clustering}.

Each cluster represents a node in the
coarse grid and, collectively, the cluster mapping defines a tentative restriction
operator, $\hat{R}$, as
\begin{equation}\label{eq:restriction}
  \hat{R}_{a,i}\
  = \begin{cases}
    1 & \text{if vertex $i$ is in cluster $a$,} \\
    0 & \text{otherwise.}
  \end{cases}
\end{equation}
An example with 12 fine nodes and 3 coarse nodes (clusters) is given in~\cref{fig:disc_agg_R}; the pattern for (the transpose of) $\hat{R}$ is also illustrated.
\begin{figure}
  \centering
  \includegraphics[height=1.35in]{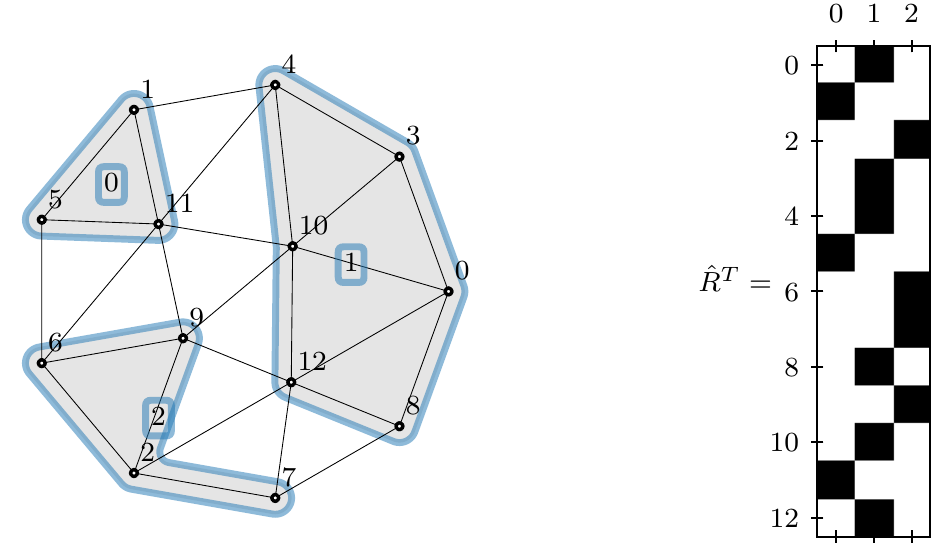}
  \caption{Example clustering and restriction matrix.}\label{fig:disc_agg_R}
\end{figure}

The restriction pattern defines the tentative interpolation pattern through $\hat{Z} = \hat{R}^T$.
Smoothed aggregation (SA) AMG proceeds by using the nonzero pattern of $\hat{Z}$ as a partition of unity to localize a given global set of vectors, $C$, defining the near-null space of matrix $A$ and, then, smoothing each column of the resulting matrix, $Z$, with (for example) weighted Jacobi.  This defines the smoothed interpolation operator, $Z$, from which
a coarse-level operator is defined over cluster DoFs as
$A_c = Z^T A Z$.

The complete algorithm for constructing SA AMG is given in~\cref{alg:sa}, where
we note the omission of several details (and optional parameters, denoted by
$[\textit{opt}]$) since the focus of this work is primarily
on~\cref{line:saaggregate}.  We refer the reader to~\cite{vanaek1996algebraic, brezina2001convergence, KStuben_2001a} for a
more complete description and analysis of aggregation-based AMG methods.  Here, we note
that \cref{line:saedgeweights} is critically important to the convergence of
the method; in practice, unit weights or algebraic distances \textit{can} be
used, yet generalized measures such as the \textit{evolution}
measure~\cite{2010_OlScTu_evosoc} (used in~\cref{sec:numerics}) have proven robust in practice.
\begin{algorithm}[!ht]
\caption{Smoothed aggregation~---~setup}\label{alg:sa}
\begin{algorithmic}[1]
\Function{sa-setup}{$A_0$, $\Nlevel$, $C$}
\For {$\ell \gets 0,\ldots,\Nlevel-1$}
\State $W \gets \textsc{edge-weights}(A_{\ell}, [\textit{opt}])$\Comment{determine strong edges in graph of $A$}\label{line:saedgeweights}
\State $m, c \gets \textsc{cluster}(W, [\textit{opt}])$\Comment{cluster membership and centers}\label{line:saaggregate}
\State $Z_{\ell} \gets \textsc{interpolation}(m, C, [\textit{opt}])$\Comment{form interpolation}
\State $A_{\ell+1} = Z_{\ell}^T A_{\ell} Z_{\ell}$\Comment{construct coarse-level operator}
\EndFor
\State \textbf{return} $\{A_\ell\}_0^{\Nlevel}$, $\{Z_\ell\}_0^{\Nlevel-1}$
\EndFunction
\end{algorithmic}
\end{algorithm}

With a multigrid hierarchy of coarse operators and interpolation, multigrid (MG) iterates
via the familiar V-cycle as in~\cref{alg:mgcycle}.  In \cref{sec:numerics}, we have considered both two-level ($\Nlevel=2$) and multilevel results, underscoring improved convergence
by improving the clustering, while leaving the other multigrid parameters untouched.  As we use a subscript within these algorithms to denote the level within the multigrid hierarchy, we use a superscript to indicate the multigrid iteration number, with $u^{(k+1)} = $\textsc{MG-V-CYCLE}($A_0,\dots,A_{\Nlevel}$,$Z_0,\dots,Z_{\Nlevel-1}$, $u^{(k)}$, $f$).
\begin{algorithm}[!ht]
\caption{MG cycle}\label{alg:mgcycle}
\begin{algorithmic}[1]
\Function{mg-v-cycle}{$A_0,\dots,A_{\Nlevel}$,$Z_0,\dots,Z_{\Nlevel-1}$, $u_0$, $f_0$}
\For {$\ell=0,\dots,\Nlevel-1$}
  \State $u_{\ell} \gets \textsc{relax}(A_{\ell}, u_{\ell}, f_{\ell})$\Comment{fixed number of relaxation sweeps}
  \State $f_{\ell+1} \gets Z_{\ell}^T (f_{\ell} - A_{\ell} u_{\ell})$\Comment{compute restricted residual}
\EndFor
\State $u_{\Nlevel} \gets A_{\Nlevel}^{-1} f_{\Nlevel}$\Comment{solve coarsest level problem}
\For {$\ell=\Nlevel-1,\dots,0$}
  \State $u_{\ell} \gets u_{\ell} + Z_{\ell} u_{\ell}$\Comment{interpolate and correct}
  \State $u_{\ell} \gets \textsc{relax}(A_{\ell}, u_{\ell}, f_{\ell})$\Comment{fixed number of relaxation sweeps}
\EndFor
\State \textbf{return} $u_0$
\EndFunction
\end{algorithmic}
\end{algorithm}

\section{Standard AMG clustering algorithms}\label{sec:standard-agg}
\subsection{Greedy clustering}\label{sec:greedy-agg}

Greedy clustering (also known as ``greedy aggregation'' or ``standard aggregation'') was first introduced by M{\'{\i}}ka and
Van{\v{e}}k~\cite{mika1992acceleration}; we
use a close variant.
Greedy clustering consists of two passes over the set of nodes of the graph. In the first pass, for each node, if
all neighbors in the graph remain unclustered, then the node becomes a center, forming a cluster from the node and its neighborhood. In the second pass, each
unclustered node is included in a neighboring cluster, if possible. If a
neighboring cluster is not found, then the unclustered node is considered a
center node and the node with its unclustered neighbors form a new cluster.
In the case of multiple neighboring clusters, there are several options:
arbitrary selection, index, size, or magnitude of the weight can each be used
to determine cluster membership. The full greedy algorithm is given in
\cref{alg:greedy-agg}.
\begin{algorithm}[!ht]
\caption{Greedy clustering. See \cref{tab:symbols} for variable definitions.}\label{alg:greedy-agg}
\begin{algorithmic}[1]
\Function{greedy-clustering}{$W$}
\State $m_i \gets 0$ for all $i = 1,\ldots,\Nnode$\Comment{initially all nodes are unclustered}
\State $a \gets 1$\Comment{first cluster index}
\For {$i \gets 1,\ldots,\Nnode$}\Comment{\textbf{first pass}}
  \If {$m_i=0$ and $m_j=0$ for all $j$ s.t.\ $W_{i,j}\ne 0$}\Comment{unclustered}
    \State $m_i \gets a$\Comment{add $i$ and neighbors to cluster $a$}
    \State $m_j \gets a$, for all $j$ s.t.\ $W_{i,j}\ne 0$
    \State $c_a \gets i $\Comment{mark cluster center}
    \State $a \gets a + 1$\Comment{increment cluster index}
  \EndIf
\EndFor
\For {$i \gets 1,\ldots,\Nnode$}\Comment{\textbf{second pass}}
  \If {$m_i = 0$}\Comment{unclustered}
    \If {$\exists\ j$ s.t.\ $W_{i,j}\ne 0$ and $m_j> 0$}\Comment{clustered neighbor}
      \State $j \gets \argmax\limits_{j \,:\, m_j > 0} W_{i,j}$\Comment{neighbor with largest weight}
      \State $m_i \gets m_j$
    \Else\Comment{form new cluster}
      \State $m_i \gets a$
      \For {$j$ such that $W_{i,j}\ne0$ and $m_j=0$}
        \State $m_j \gets a$
      \EndFor
      \State $a \gets a + 1$\Comment{increment cluster index}
    \EndIf
  \EndIf
\EndFor
\State \textbf{return} $m, c$
\EndFunction
\end{algorithmic}
\end{algorithm}

\subsection{Maximal independent set based clustering}\label{sec:mis}

The greedy algorithm is inherently serial, yet there are two
immediate observations.
First, any two center nodes of two (distinct) clusters
must be more than two edges apart.
Second, if an unclustered node is more than two
edges from any existing center, then the node is eligible
to be a center of a new cluster. Hence, the center nodes
from the greedy algorithm represent a distance-2
maximal independent set or MIS(2).  This leads to the MIS(2) clustering algorithm, where
an MIS(2) over the nodes is first constructed, followed by construction of the  clustering using the MIS(2) center nodes.
This has been shown to exhibit a high degree of parallelism~\cite{bell2012exposing}; see~\cite[Algorithm 5]{bell2012exposing} for details.

Given a distance-2 maximal independent set, the clustering
process is straightforward. In the first step, the index of
the cluster representing the center is propagated to its neighbors.
This continues in the second step, where the index of the cluster
is propagated to the second layer of neighbors;
if there are multiple clusters adjacent to an unclustered
node, the choice is made arbitrarily (or by index). The algorithm is shown
in \cref{alg:mis2-agg}.
\begin{algorithm}[!ht]
\caption{MIS(2) clustering. See \cref{tab:symbols} for variable definitions.}\label{alg:mis2-agg}
\begin{algorithmic}[1]
\Function{mis(2)-clustering}{$W$}
\State $c \gets \textsc{mis(\(W\), 2)}$\Comment{distance-2 independent set}
\State $m_i \gets 0$ for $i=1,\dots,\Nnode$
\State $\Ncluster \gets |c|$
\For {$a=1,\dots,\Ncluster$}\Comment{\textbf{pass 1:} distance-1}
  \State $i \gets c_a$\Comment{index of center for cluster $a$}
  \State $m_i \gets a$\Comment{set cluster number for center}
  \For {$j$ s.t.\ $W_{i,j}\ne 0$}
    \State $m_j \gets a$\Comment{set cluster number for neighbors}
  \EndFor
\EndFor
\For {$i$ s.t.\ $m_i > 0$}\Comment{\textbf{pass 2:} distance-2}
  \For {$j$ s.t.\ $W_{i,j}\ne 0$ and $m_j = 0$}
    \State $m_j \gets m_i$\Comment{set cluster number for neighbors}
  \EndFor
\EndFor
\State \textbf{return} $m, c$
\EndFunction
\end{algorithmic}
\end{algorithm}

With an appropriate ordering, the first pass of MIS-based and greedy
clustering can yield identical clusters.  With only minor differences in
the second pass, we expect the clustering patterns to be similar.  Indeed, the
convergence factors of AMG based on these two clustering strategies are shown
to be close in practice~\cite[Appendix]{bell2012exposing}.

\section{Notation}\label{app:notation}
\Cref{tab:symbols} summarizes the notation.
\begin{table}[b]
  \centering
  \begin{tabular}{lp{81mm}l}
    \toprule
    Symbol & Definition & Domain \\
    \midrule
    $A$ & left hand side operator in the linear system $A u = f$ & $\mathbb{R}^{N_{\rm node} \times N_{\rm node}}$ \\
    $a$ & cluster index & $\{1,\ldots,N_{\rm cluster}\}$ \\
    $B$ & set of border nodes between clusters; $B \subseteq V$ & $\mathcal{P}(V)$ \\
    $c_a$ & center node index for cluster $a$ & $V$ \\
    $c^1_a,c^2_a$ & new cluster centers if cluster $a$ is split; see \cref{alg:split_improvement} & $V$ \\
    $\delta$ & second-term energy scaling coefficient, see \cref{eq:delta} & $\mathbb{R}$ \\
    $d_i$ & shortest-path distance to node $i$ from nearest center; $d_i = \infty$ if node $i$ is not in a cluster & $\overbar{\mathbb{R}}_{\ge 0}$ \\
    $D_{i,j}$ & shortest-path distance from node $i$ to $j$ within a single cluster; $D_{i,j} = \infty$ if there is no such path $i \to j$ & $\overbar{\mathbb{R}}_{\ge 0}$ \\
    $\Delta_{\rm min}$ & minimum difference between distinct values of $W_{i,j}$ & $\mathbb{R}$ \\
    $E$ & set of edges in the graph $G$ & $\mathcal{P}(V \times V)$ \\
    $f$ & right hand side of the linear system $A u = f$ & $\mathbb{R}^{N_{\rm node}}$ \\
    $G$ & graph with nodes $V$, edges $E$, and weights $W$ & \\
    $H$ & shortest-path energy function, see \cref{eq:energy} & $\mathbb{R}$ \\
    $H_\delta$ & energy function minimized by clustering, see \cref{eq:squared_energy} & $\mathbb{R}$ \\
    $i,j,k$ & node indices & $V$ \\
    $L_a$ & energy increase if cluster $a$ is eliminated; see \cref{alg:elimination_penalty} & $\mathbb{R}$ \\
    $M_a$ & whether cluster $a$ is modifiable during rebalancing & $\{\text{True},\text{False}\}$ \\
    $m_i$ & cluster index (membership) containing node $i$ & $\{1,\dots,\Ncluster\}$ \\
    $\Nnode$ & number of nodes & $\mathbb{N}_1$ \\
    $\Ncluster$ & number of clusters & $\mathbb{N}_1$ \\
    $n_i$ & number of nodes with $i$ as predecessor & $\mathbb{N}_0$ \\
    $P_{i,j}$ & predecessor index for node $j$ on the shortest path $i \to j$ within a cluster; $P_{i,j} = 0$ if there is no path $i \to j$ & $V \cup \{0\}$ \\
    $p_i$ & predecessor index for node $i$ on the shortest path from its cluster center; $p_i = 0$ if node $i$ is not in a cluster & $V \cup \{0\}$ \\
    $q_i$ & sum of squared distances from node $i$ to all other nodes in the same cluster & $\mathbb{R}_{\ge 0}$ \\
    $S_a$ & energy decrease if cluster $a$ is split; see \cref{alg:split_improvement} & $\mathbb{R}$ \\
    $s_a$ & size (number of nodes) of cluster $a$ & $\mathbb{N}_1$ \\
    $\mathfrak{s}_i$ & size (number of nodes) of the cluster containing node $i$; $\mathfrak{s}_i = 0$ if node $i$ is not in a cluster & $\mathbb{N}_0$ \\
    $T$ & total number of time/iteration steps taken by an algorithm
    ($T_{\text{max}}$ and $T_{\text{BFmax}}$ denote the maximum)
        & $\mathbb{N}_0$ \\
    $t$ & time/iteration index & $\mathbb{N}_0$ \\
    $u$ & solution vector in the linear system $A u = f$ & $\mathbb{R}^{N_{\rm node}}$ \\
    $V$ & set of nodes in the graph; $V = \{1,\ldots,N_{\rm node}\}$ & $\mathcal{P}(\mathbb{N}_1)$ \\
    $V_a$ & set of nodes in cluster $a$; $V_a \subseteq V$ & $\mathcal{P}(V)$ \\
    $W_{i,j}$ & weighted adjacency matrix of the graph where $W_{i,j}$ is the edge weight $i \to j$ & $\mathbb{R}$ \\
    $z$ & generic variable placeholder & --- \\
    \bottomrule
  \end{tabular}
  \caption{List of symbols. Here $\mathcal{P}()$ denotes the power set and $\overbar{\mathbb{R}}$ is the extended reals.}\label{tab:symbols}
\end{table}

\end{document}